\newtheorem{theorem}{Theorem}[section]
\newtheorem{remark}{Remark}[section]
\newtheorem{proposition}{Proposition}[section]
\numberwithin{equation}{section}
\title{Spectral properties related to generalized complementary Romanovski-Routh polynomials}
\author{Vinay Shukla$^\dagger$}
\address{$^\dagger$Department of Mathematics\\ Indian Institute of Technology, Roorkee-247667, Uttarakhand, India}
\email{vshukla@ma.iitr.ac.in}
\author{A. Swaminathan\, $^{\#\ddagger}$}{\thanks{$^{\#}$Corresponding author}}
\address{$^\ddagger$Department of Mathematics\\ Indian Institute of Technology, Roorkee-247667, Uttarakhand, India}
\email{mathswami@gmail.com, a.swaminathan@ma.iitr.ac.in}
\begin{document}

\keywords {Orthogonal polynomials; Self perturbation; Linear combination of polynomials; Hypergeometric function; Biorthogonality; $R_{II}$ type recurrence; Zeros}

\subjclass[2010] {42C05, 26C10, 15A18, 33C45}

\begin{abstract}
Complementary Romanovski-Routh polynomials play an important role in extracting specific properties of orthogonal polynomials. In this work, a generalized form of the Complementary Romanovski-Routh polynomials (GCRR) that has the Gaussian hypergeometric representation and satisfies a particular type of recurrence called $R_{II}$ type three term recurrence relation involving two arbitrary parameters is considered. Self perturbation of GCRR polynomials leading to extracting two different types of $R_{II}$ type orthogonal polynomials are identified. Spectral properties of these resultant polynomials in terms of tri-diagonal linear pencil were analyzed. The LU decomposition of these pencil matrices provided interesting properties involving biorthogonality. Interlacing properties between the zeros of the polynomials in the discussion are established.
\end{abstract}
\maketitle
\markboth{Vinay Shukla and A. Swaminathan}{Linear combination of $R_{II}$ polynomials}	

\section{Introduction}\label{Introduction}

Self perturbation of a sequence of orthogonal polynomials $\{\mathcal{P}_n(x)\}_{n=0}^\infty$ is defined as \cite{Kwon Lee_Paco Park 2001}
\begin{align}\label{Linear combination Q_n}
\mathcal{L}_n(x)=\mathcal{P}_n(x)-\alpha_n\mathcal{P}_{n-1}(x), \quad \alpha_n \in (\mathbb{R} ~ \rm{or} ~ \mathbb{C})\backslash \{ 0 \}, \quad n\geq 0,
\end{align}
where $\{\mathcal{L}_n(x)\}_{n=0}^\infty$ denotes the perturbed sequence. The study of such self perturbation of polynomials is useful because the linear span of polynomials so formed is rich in the sense of cardinality. \eqref{Linear combination Q_n} can also be viewed as a linear combination of two continuous terms of the sequence $\{\mathcal{P}_n(x)\}_{n=0}^\infty$. Linear combinations of orthogonal polynomials have been extensively explored in the literature, both from a theoretical and practical application point of view. In the fundamental paper \cite{Shohat TAMS 1937}, a linear combination of orthogonal polynomials arises naturally in the study of mechanical quadrature rules. In the context of quasi-orthogonality, a sequence of orthogonal polynomials written as a linear combination of a fixed number of elements from another sequence of orthogonal polynomials is investigated \cite{Chihara quasi PAMS 1957}. If a polynomial sequence $\{q_n(x)\}_{n=0}^\infty$ satisfies the condition \cite{Brezinski Driver Redivo-Zaglia ANM 2004}
\begin{align*}
\int_{a}^{b} x^r q_n(x)w(x)dx \begin{cases}
	= 0, \quad  r=0,1,\ldots, n-k-1, \\
	\neq 0 \quad r =n-k,
\end{cases}
\end{align*}
where $w(x)$ is a positive weight function on $[a,b]$, then $q_n(x)$ is said to be quasi-orthogonal of order $k$ on $[a,b]$ with respect to $w(x)$. The necessary and sufficient conditions for orthogonality of general linear combinations $\{q_n(x)\}_{n=0}^\infty$, say
\begin{align*}
q_n(x)=p_n(x)+a_1p_{n-1}(x)+\ldots++a_kp_{n-k}(x), \quad a_k \neq 0, \quad n \geq k,
\end{align*}
where $\{p_n(x)\}_{n=0}^\infty$ is a sequence of orthogonal polynomials, are examined in \cite{Alfaro Marcellan Pena Rezola JCAM 2010}. The concept of quasi-orthogonality was first introduced in \cite{Riesz 1923}. In \cite{Fejer 1933}, self perturbation (the case $k=1$) was studied, and the algebraic properties of such polynomials were analysed by many authors \cite{Draux Integral transform 2016, Joulak ANM 2005, Marcellan Peherstorfer Steinbauer 1996}. \par

Orthogonal polynomials on the real line generated by the recurrence relation
\begin{align*}
&\mathcal{B}_{n+1}(x) = (x-c_n)\mathcal{B}_n(x)-\lambda_n x\mathcal{B}_{n-1}(x), \quad n\geq 0, \quad x \in \mathbb{R}, \\
& \nonumber	\mathcal{B}_{-1}(x) = 0, \qquad \mathcal{B}_0(x) = 1,
\end{align*}
where $\{c_n\}_{n \geq 0}$ and $\{\lambda_n\}_{n \geq 0}$ are positive, are known as Laurent orthogonal polynomials or L-orthogonal polynomials. In \cite{Batelo Bracciali Ranga JCAM 2005}, a linear combination of these polynomials are studied by relating them to the L-orthogonal polynomials associated with a class of strong distribution functions $S^3(1/2, \beta, b)$, $0 < \beta < b <\infty$. \par

A sequence of monic orthogonal polynomials $\{\mathcal{P}_n(x)\}_{n=0}^\infty$ is $d$-orthogonal with respect to a vector of linear forms $\mathcal{U}=(u_o,\ldots,u_{d-1})^T $, if
\begin{align*}
&\langle u_r,x^m\mathcal{P}_n(x) \rangle	=0, \quad n\geq md+r+1, \quad m \geq 0,\\
& \langle u_r,x^m\mathcal{P}_{md+r}(x) \rangle \neq 0, \quad m \geq 0, \quad 0 \leq r \leq d-1.
\end{align*}
Self perturbation of a monic $d$-orthogonal polynomial sequence
has been independently studied in \cite{Marcellan Saib 2019}. The conditions under which such a perturbed sequence, say $\{\mathcal{L}_n(x)\}_{n=0}^\infty$, becomes a monic $d$-orthogonal polynomial sequence are also obtained \cite{Marcellan Saib 2019}. Note that, for $d=1$, we retrieve the standard orthogonality. A particular case of $2$-orthogonal polynomials and their Darboux transformation have been discussed recently in \cite{Barrios Ardila RACSAM 2020, Marcellan Chaggara Ayadi 2021}.
The orthogonality of Hahn-Appell polynomial is established in \cite{Varma lmaz zarslan RACSAM 2019}.

The recurrence relation, given as
\begin{align*}
&  \mathcal{F}_{n+1}(z) = k_n(z-c_n)\mathcal{F}_n(z)-\lambda_n (z-a_n)\mathcal{F}_{n-1}(z), \quad n\geq 0, \\
& \nonumber	\mathcal{F}_{-1}(z) = 0, \qquad \mathcal{F}_0(z) = 1,
\end{align*}
with the assumptions $k_n \neq 0$ and $\lambda_n \neq 0$, is said to be of $R_I$ type \cite{Esmail masson JAT 1995} and polynomials generated by this relation are called $R_I$ polynomials. Recently, such a perturbed sequence of $R_I$ polynomials $\mathcal{L}_n(x)$, as defined by \eqref{Linear combination Q_n}, was inspected by recursively constructing a unique sequence $\{\alpha_{n}\}_{n=0}^\infty$  such that $\mathcal{L}_n(x)$ is again a sequence of $R_I$ polynomials \cite{KKB Swami 2018}. Biorthogonality properties and para-orthogonal polynomials obtained from self perturbed sequences are also reviewed. \par

Recurrence relation of the form
\begin{align}\label{R2 Recurrence Relation}
	&  \mathcal{P}_{n+1}(x) =\rho_n (x-c_n)\mathcal{P}_n(x)-d_n (x-a_n)(x-b_n)\mathcal{P}_{n-1}(x), \quad n\geq 0, \\
	& \nonumber	\mathcal{P}_{-1}(x) = 0, \qquad \mathcal{P}_0(x) = 1,
\end{align}
are well explored in the literature \cite{Esmail masson JAT 1995}. It was established that if $d_n \neq 0$ and $\rho_n \neq 0$ with $\mathcal{P}_n(a_n)\mathcal{P}_n(b_n)\neq 0$, for $n\geq 0$, then there exists a linear functional $\mathfrak{N}$ such that the orthogonality relations
\begin{align*}
	\mathfrak{N}\left[x^k \dfrac{\mathcal{P}_n(x)}{\prod_{j=1}^{n}(x-a_j)(x-b_j)} \right]\neq  0, \quad 0\leq k < n,
\end{align*}
hold \cite[Theorem 3.5]{Esmail masson JAT 1995}. The recurrence relation \eqref{R2 Recurrence Relation} shall be referred to as the recurrence relation of $R_{II}$ type and the polynomials $\mathcal{P}_n(x)$,  $n\geq 1$, generated by it as $R_{II}$ polynomials, according to \cite{Esmail masson JAT 1995}.
Note that the infinite continued fraction
\begin{align}\label{R2 continued fraction}
\mathcal{R}_{II}(x) = \frac{1}{\rho_0(x-c_0)} \mathbin{\genfrac{}{}{0pt}{}{}{-}} \frac{d_1(x-a_1)(x-b_1)}{\rho_1(x-c_1)} \mathbin{\genfrac{}{}{0pt}{}{}{-}} \frac{d_2(x-a_2)(x-b_2)}{\rho_2(x-c_2)} \mathbin{\genfrac{}{}{0pt}{}{}{-}} \mathbin{\genfrac{}{}{0pt}{}{}{\cdots}} ,
\end{align}
terminates when $x=a_k$ or $x=b_k$, $k \geq 1$. A method for constructing approximation of continued fractions is developed recently in \cite{Chen Srivastava Wang RACSAM 2021}.
As a natural consequence of the aforementioned studies, the purpose of this work is to investigate the self perturbation of $R_{II}$ polynomials and derive relationships between parameters so that the resulting sequence becomes a sequence of $R_{II}$ polynomials again. We refer to \cite{KKB Swami PAMS 2019, KKB LAA 2021, Bracciali Pereira ranga 2020} and references therein for some recent developments in the theory of $R_{II}$ polynomials.  In \cite{swami vinay R2 2022}, a specific form of $R_{II}$ recurrence
\begin{align}\label{special R2 Linear pp}
	&\mathcal{P}_{n+1}(x) =\rho_n (x-c_{n})\mathcal{P}_n(x)-d_{n} (x^2+\omega^2)\mathcal{P}_{n-1}(x), \quad n\geq 1, \\
	& \nonumber	\mathcal{P}_{0}(x) = 1, \qquad \mathcal{P}_1(x) = x-c_0,
\end{align}
was considered, where $\{ \rho_n \geq 1\}_{n \geq 0}$ and $\{c_n\}_{n \geq 0}$ are real sequences and $\{d_{n}\}_{n \geq 1}$ is a positive chain sequence (see \cite{Chihara book 1978}). This recurrence relation is linked to a generalized eigenvalue problem whose eigenvalues are zeros of $\mathcal{P}_n(x)$ in this work. \par
Two polynomial sequences $\{\mathcal{U}_n(x)\}_{n=0}^\infty$ and $\{\mathcal{V}_n(x)\}_{n=0}^\infty$ are said to be biorthogonal with regard to a moment functional $\mathcal{M}$ if
\begin{align*}
	\mathcal{M}(\mathcal{U}_n(x)\mathcal{V}_m(x))=\kappa_n\delta_{n,m}, \quad \kappa_n \neq 0, \quad n,m \geq 0,
\end{align*}
holds \cite{Konhauser JAMA 1965}. It's worth noting that, unlike conventional orthogonality, biorthogonality uses two separate sequences. The biorthogonality employed in this work is between two rational functions $\psi_i(\lambda)$ and $\eta_k(\lambda)$ with respect to a finite discrete measure located at the points $\lambda_s$ with the weights $\Omega_s$ and is defined by the condition \cite{Zhedanov biorthogonal JAT 1999}
\begin{align*}
	\sum_{s=0}^{N}\Omega_s\psi_i(\lambda_s)\eta_k(\lambda_s)=\delta_{i,k}, \quad i,k=0,1,\ldots,N.
\end{align*}
Biorthogonal polynomials also appear while studying the two-matrix model in the theory of random matrices and have the property that
\begin{align*}
\int_{\mathbb{R}}\int_{\mathbb{R}}\mathcal{U}_n(x)\mathcal{V}_m(y)e^{-p(x)-q(y)+2\tau xy}dx dy = 0, \quad n \neq m,
\end{align*}
where $p(x)$ and $q(y)$ are polynomial potentials sufficiently large at infinity and $\tau$ is a non-zero constant \cite{Kuijlaars McLaughlin 2005}. The biorthogonal polynomials are characterization of multiple orthogonal polynomials (MOPs). MOPs are useful in solving Hermite-Pad\'e approximation problems related to Nikishin system. The self perturbed sequence $\{\mathcal{L}_n(x)\}_{n=0}^\infty$ is used to derive biorthogonality relations between rational (or eigen) functions generated from the generalised eigenvalue problem. \par

The interlacing properties of polynomial zeros are crucial in numerical quadrature, where zeros are used as nodes \cite{Bracciali Pereira ranga 2020}, approximation theory and various other applications \cite{Beardon Driver JAT 2005, Brezinski Driver Redivo-Zaglia ANM 2004, Jordaan Tookos 2009,Joulak ANM 2005}. With this in mind, a thorough investigation of the interlacing properties of zeros of $R_{II}$ polynomials and perturbed polynomials has been carried out. Another significant question in the study of such interlacing properties is whether and when the zeros of two different sequences of $R_{II}$ polynomials separate each other. From a theoretical standpoint, a solution to this is supplied by generating two separate self perturbed sequences, supported by an example. \par

The structural organisation of the paper is as follows: The generalized complementary Romanovski-Routh polynomials (GCRR polynomials), which are new in the literature, are constructed in this manuscript. In \Cref{Recurrence Relations of R2 type}, a three-term recurrence relation for self perturbed GCRR polynomials is generated, which is further reduced to a special $R_{II}$-type recurrence \eqref{special R2 Linear pp} under specific requirements on recurrence coefficients. In \Cref{Biorthogonlality from linear combination}, the generalised eigenvalue representation $x\mathcal{J}_n-\mathcal{K}_n$ arising from $\{\mathcal{L}_n(x)\}_{n=0}^\infty$ is obtained. Biorthogonality relations are also discussed using Cholesky decomposition, Darboux transformation and $\mathcal{L}\mathcal{D}\mathcal{U}$ decomposition of $\mathcal{J}_n$. \Cref{Interlacing Properties} is dedicated to the interlacing properties of zeros. Further, the existence of a unique sequence $\{\alpha_{n}\}_{n=0}^\infty$ is demonstrated so that the resultant of self perturbation of GCRR polynomials is useful in recovering another $R_{II}$ polynomial. An illustration to justify the developed concepts is provided.


\section{Generalized CRR polynomials and their self perturbation}\label{Recurrence Relations of R2 type}
The present section serves the purpose of defining the sequence $\{\alpha_{n}\}_{n=0}^\infty$ with the requirement that $\{\mathcal{L}_n(x)=\mathcal{P}_n(x)- \alpha_n\mathcal{P}_{n-1}(x)\}_{n=0}^\infty$ becomes a sequence of $R_{II}$ polynomials, where $\mathcal{P}_n(x)$ are the GCRR polynomials. For the complementary Romanovski-Routh polynomials studied in \cite{Finkelshtein Ribeiro Ranga Tyaglov PAMS 2019, Finkelshtein Ribeiro Ranga Tyaglov CRR 2020}, a generalization can be constructed in the following way:

The Jacobi polynomials corresponding to the weight function $\mu^{(\alpha, \beta)}(x)=(\omega-x)^{\beta-1+\frac{i\alpha}{2}}(\omega+x)^{\beta-1-\frac{i\alpha}{2}}$ are given \cite{Koekoek Lesky Swarttouw book 2010} as
\begin{align*}
\mathcal{H}^{(\beta-1+\frac{i\alpha}{2},\beta-1-\frac{i\alpha}{2})}_{n}(x)=\dfrac{(2\omega)^n(\beta+\frac{i\alpha}{2})_n}{(n+2\beta-1)_n}{}_2 F_1\left(-n,n+2\beta-1; \beta+\frac{i\alpha}{2};\dfrac{\omega-x}{2\omega}\right).
\end{align*}
When $x$ is purely imaginary, the modified version and related Rodrigues formula \cite{Koekoek Lesky Swarttouw book 2010} can be written, respectively, as
\begin{align*}
&\mathcal{H}^{(\beta-1+\frac{i\alpha}{2},\beta-1-\frac{i\alpha}{2})}_{n}(ix)=\dfrac{(2\omega)^n(\beta+\frac{i\alpha}{2})_n}{(n+2\beta-1)_n} {}_2 F_1\left(-n,n+2\beta-1; \beta+\frac{i\alpha}{2};\dfrac{\omega-ix}{2\omega}\right), \\	&\mathcal{H}^{(\beta-1+\frac{i\alpha}{2},\beta-1-\frac{i\alpha}{2})}_{n}(ix)=\dfrac{(n+2\beta-1)_n^{-1}(-1)^n(-i)^n}{(\omega^2+x^2)^{\eta-1}e^{-\cot^{-1}\frac{x}{w}}}\dfrac{d^n}{dx^n}[(\omega^2+x^2)^{n+\eta-1}e^{-\cot^{-1}\frac{x}{w}}],
\end{align*}
and the weight function $\mu^{(\alpha, \beta)}(x)$ becomes $\mu^{(\alpha, \beta)}(ix)=(\omega^2+x^2)^{\eta-1}e^{-\cot^{-1}\frac{x}{w}}$. Then, a generalization of Rodrigues formula for Romanovski-Routh polynomials is given by
\begin{align*}
\mathcal{R}^{(\alpha,\beta)}_{n}(x)=\dfrac{1}{\mu^{(\alpha, \beta)}(ix)} \dfrac{d^n}{dx^n}[\mu^{(\alpha, \beta)}(ix)(\omega^2+x^2)^{n}].
\end{align*}
The polynomials $\mathcal{R}^{(\alpha,\beta)}_{n}(x)$ and $\mathcal{H}^{(\beta-1+\frac{i\alpha}{2},\beta-1-\frac{i\alpha}{2})}_{n}(ix)$ are related by the expression
\begin{align*}	\mathcal{R}^{(\alpha,\beta)}_{n}(x)=(-i)^n(n+2\beta-1)_n\mathcal{H}^{(\beta-1+\frac{i\alpha}{2},\beta-1-\frac{i\alpha}{2})}_{n}(ix).
\end{align*}
It is easy to verify that $\mathcal{R}^{(\alpha,\beta)}_{n}(x)$ are the solutions of the differential equation
\begin{align*}
(x^2+\omega^2)y''(x)+(2\beta x+\alpha \omega)y'(x)-n(n+2\beta-1)y(x)=0,
\end{align*}
and satisfy a finite orthogonality condition
\begin{align*}
\int_{-\infty}^{\infty}\mathcal{R}^{(\alpha,\beta)}_{n}(x)\mathcal{R}^{(\alpha,\beta)}_{m}(x)\mu^{(\alpha, \beta)}(ix) dx = 0, \quad m \neq n,
\end{align*}
when $m+n-1<-2\beta$ and $\beta$ is large enough negative number.\par
The polynomials complementary to $\mathcal{Q}^{(\alpha,\beta)}_{n}(x)$ are defined using a variation of Rodrigues formula given in \cite{Weber 2007} as
\begin{align*}
\mathcal{Q}^{(\alpha,\beta)}_{n}(x)=\dfrac{(x^2+\omega^2)^n}{\mu^{(\alpha, \beta)}(ix)} \dfrac{d^n}{dx^n}\mu^{(\alpha, \beta)}(ix).
\end{align*}
It is established in \cite{Raposo Weber Castillo Kirchbach 2007, Weber 2007} that $\mathcal{Q}^{(\alpha,\beta)}_{n}(x)=\mathcal{R}^{(\alpha,\beta-n)}_{n}(x)$, therefore,
\begin{align}\label{Q_n alpha beta}
\mathcal{Q}^{(\alpha,\beta)}_{n}(x)=(-2i\omega)^n(\beta-n+\frac{i\alpha}{2})_n{}_2 F_1\left(-n,2\beta-n-1; \beta-n+ \frac{i\alpha}{2}; \dfrac{\omega-ix}{2\omega}\right).
\end{align}
The generalized complementary Romanovski-Routh polynomials (GCRR) can now be defined as
\begin{align}\label{P_n to Q_n connection}
\mathcal{P}_{n}(x):=\dfrac{(-1)^n}{2^n(\zeta)_n}\mathcal{Q}^{(2\theta,-\zeta+1)}_{n}(x), \quad n \geq 1,
\end{align}
and are given by the hypergeometric expression
\begin{align} \label{GCRR polynomial}
\mathcal{P}_{n}(x) = \dfrac{(x-i\omega)^n}{2^n}\dfrac{(2\zeta)_n}{(\zeta)_n}{}_2 F_1 \left(-n,e; e+\bar{e};\dfrac{-2i}{x-i\omega}\right),
\end{align}
where $e = \zeta+i\theta$, $\zeta > 0$. This hypergeometric expression is obtained from \eqref{Q_n alpha beta} using two Phaff transformations given in \cite{Andrews Askey Roy book 1999} (see eq. (2.2.6) and (2.3.14)). They are orthogonal with respect to the weight
\begin{align*}
\omega^{(\zeta,\theta)}(x)= \dfrac{2^{2\zeta-1}|\Gamma(e)|^2 e^{\theta \pi} (e^{-\cot^{-1}(\frac{x}{\omega})})^{2\theta}}{\Gamma(2\zeta-1)2\pi (\omega^2+x^2)^\zeta},
\end{align*}
and satisfy the recurrence relation
\begin{align}\label{GCRR recurrence}
\mathcal{P}_{n+1}(x)= \left(x-\dfrac{\theta}{\zeta+n}\right)\mathcal{P}_n(x)-\dfrac{n(2\zeta+n-1)}{4(\zeta+n)(\zeta+n-1)} (x^2+\omega^2)\mathcal{P}_{n-1}(x), \quad  n \geq 1,
\end{align}
where $\mathcal{P}_{0}(x)=1$ and $\mathcal{P}_{1}(x)=x-c_1$.

In \cite{Draux Integral transform 2016}, it is proved that quasi-orthogonal polynomials satisfy recurrence relations with polynomial coefficients. The polynomials obtained on self perturbation of $R_{I}$ polynomials \cite{KKB Swami 2018} are shown to satisfy a three-term recurrence relation with polynomial coefficients of degree at most two. The first result shows that a three-term recurrence relation with polynomial coefficients of degree at most four exists when GCRR polynomials are self perturbed. \par

\begin{theorem}\label{Theorem Linear R2 general RR}
Let	$\{\mathcal{P}_n(x)\}_{n=0}^\infty$ be a sequence of GCRR polynomials generated by \eqref{GCRR recurrence}.
Then the sequence of self perturbed polynomials	$\{\mathcal{L}_n(x)\}_{n=0}^\infty$, defined by \eqref{Linear combination Q_n}, satisfies the three term recurrence relation with polynomial coefficients of the form
\begin{align}\label{Linear R2 general RR}
(e_nx^2+f_nx+g_n)\mathcal{L}_{n+1}(x)&=(p_nx^3+q_nx^2+r_nx+s_n)\mathcal{L}_n(x) \nonumber \\ &+(t_nx^4+u_nx^3+v_nx^2+w_nx+z_n)\mathcal{L}_{n-1}(x), \quad n\geq 1,
\end{align}
with initial conditions $\mathcal{L}_1(x)=\rho_0(x-\alpha_1\rho_0^{-1}-c_0)$ and $\mathcal{L}_0(x)=0$. The constants $e_n,~ f_n,~ g_n,~ p_n,~ q_n,~ r_n,~ s_n,~ t_n,~ u_n,~ v_n,~ w_n,~ z_n$, $n\geq 1$, are given by
\begin{align*}
&e_n= d_{n-1}, \quad f_n=-\alpha_{n-1}\rho_{n-1}, \quad g_n=d_{n-1}\omega^2+\alpha_{n-1}(\alpha_{n}+\rho_{n-1}c_{n-1}), \\
& p_n=\rho_ne_{n}, \quad q_n=\alpha_{n-1}(d_{n}-\rho_n\rho_{n-1})-d_{n-1}[\rho_n c_n+\alpha_{n+1}],\\
&r_n=\alpha_{n-1}\rho_{n-1}[\alpha_{n+1}+\rho_n(c_n+c_{n-1})]
+\rho_n d_{n-1}\omega^2, \quad t_n=-e_{n+1}d_{n-1}, \\ &s_n=-(\alpha_{n-1}\rho_{n-1}c_{n-1}+d_{n-1}\omega^2)(\rho_nc_n+\alpha_{n+1})+\alpha_{n-1}d_{n}\omega^2, \\
&u_n\omega^2=-d_{n-1}f_{n+1}\omega^2=w_n, \quad v_n =-d_{n-1}[g_{n+1}+\omega^2e_{n+1}], \quad z_n=-d_{n-1}\omega^2g_{n+1}, \quad n\geq 1.
\end{align*}	
\end{theorem}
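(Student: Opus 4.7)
The plan is to convert the $R_{II}$-type recurrence \eqref{special R2 Linear pp} for $\{\mathcal{P}_n\}$ into a recurrence for $\{\mathcal{L}_n\}$ by eliminating the original polynomials. Applying the recurrence inside the definition $\mathcal{L}_{n+1}=\mathcal{P}_{n+1}-\alpha_{n+1}\mathcal{P}_n$ immediately gives
\[
\mathcal{L}_{n+1}=[\rho_n(x-c_n)-\alpha_{n+1}]\mathcal{P}_n-d_n(x^2+\omega^2)\mathcal{P}_{n-1}.
\]
Thus everything reduces to expressing $\mathcal{P}_{n-1}$ (and hence $\mathcal{P}_n=\mathcal{L}_n+\alpha_n\mathcal{P}_{n-1}$) as a rational combination of $\mathcal{L}_n$ and $\mathcal{L}_{n-1}$.

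To do this, I would couple the two definitions $\mathcal{L}_n=\mathcal{P}_n-\alpha_n\mathcal{P}_{n-1}$ and $\mathcal{L}_{n-1}=\mathcal{P}_{n-1}-\alpha_{n-1}\mathcal{P}_{n-2}$ with the single recurrence $\mathcal{P}_n=\rho_{n-1}(x-c_{n-1})\mathcal{P}_{n-1}-d_{n-1}(x^2+\omega^2)\mathcal{P}_{n-2}$, viewed as a $2\times 2$ linear system in the unknowns $(\mathcal{P}_{n-1},\mathcal{P}_{n-2})$. Solving yields
\[
\mathcal{P}_{n-1}=\frac{\alpha_{n-1}\mathcal{L}_n-d_{n-1}(x^2+\omega^2)\mathcal{L}_{n-1}}{\alpha_{n-1}[\rho_{n-1}(x-c_{n-1})-\alpha_n]-d_{n-1}(x^2+\omega^2)},
\]
whose denominator, after a global sign flip, is exactly $e_nx^2+f_nx+g_n$; this already confirms the formulas for $e_n,f_n,g_n$. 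Substituting the resulting expressions for $\mathcal{P}_n$ and $\mathcal{P}_{n-1}$ into the display above and clearing denominators produces \eqref{Linear R2 general RR}.

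The rest is matching coefficients of powers of $x$. The cubic multiplying $\mathcal{L}_n$ comes from summing $[d_{n-1}(x^2+\omega^2)-\alpha_{n-1}A_n]A_{n+1}+\alpha_{n-1}[\alpha_n A_{n+1}-d_n(x^2+\omega^2)]$, where $A_{k}(x):=\rho_{k-1}(x-c_{k-1})-\alpha_{k}$, and expanding yields $p_n,q_n,r_n,s_n$ directly. A conceptual shortcut, which also serves as a consistency check, is that the multiplier of $\mathcal{L}_{n-1}$ factors as $-d_{n-1}(x^2+\omega^2)\bigl(e_{n+1}x^2+f_{n+1}x+g_{n+1}\bigr)$, because the bracket $\alpha_n A_{n+1}-d_n(x^2+\omega^2)$ is precisely $-(e_{n+1}x^2+f_{n+1}x+g_{n+1})$ under an index shift. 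Expanding this factorization immediately delivers $t_n=-d_{n-1}e_{n+1}$, $v_n=-d_{n-1}(g_{n+1}+\omega^2e_{n+1})$, $z_n=-d_{n-1}\omega^2 g_{n+1}$, and the symmetric pair $u_n=-d_{n-1}f_{n+1}$ with $w_n=u_n\omega^2$. The main obstacle is not conceptual but bookkeeping: tracking signs and sorting contributions to the $x^3$ coefficient of the $\mathcal{L}_n$-multiplier, which arise from three distinct products, demands care. The initial conditions follow by direct evaluation of $\mathcal{L}_0=\mathcal{P}_0-\alpha_0\mathcal{P}_{-1}=1-\alpha_0\cdot 0$ (requiring the convention $\alpha_0=1$ consistent with the sequence's initialization) and $\mathcal{L}_1=\mathcal{P}_1-\alpha_1\mathcal{P}_0$, giving the stated $\mathcal{L}_1(x)=\rho_0(x-\alpha_1\rho_0^{-1}-c_0)$.
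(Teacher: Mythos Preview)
Your argument is correct and is, at bottom, the same elimination the paper carries out, but your organization is cleaner. Where the paper multiplies the equation for $\mathcal{L}_{n+1}$ by a judiciously chosen quadratic and then removes the $\mathcal{P}$-terms through a chain of four successive substitutions, you isolate $\mathcal{P}_{n-1}$ at the outset by solving the $2\times 2$ linear system and back-substitute in a single step. Your observation that the $\mathcal{L}_{n-1}$-multiplier factors as $-d_{n-1}(x^2+\omega^2)\bigl(e_{n+1}x^2+f_{n+1}x+g_{n+1}\bigr)$ via the index shift $\alpha_nA_{n+1}-d_n(x^2+\omega^2)=-(e_{n+1}x^2+f_{n+1}x+g_{n+1})$ is a genuine structural shortcut that the paper's computation reaches only implicitly; it delivers $t_n,u_n,v_n,w_n,z_n$ without any further expansion.

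Two small points. First, a sign slip: clearing the denominator $-D_n$ in your expression for $\mathcal{P}_{n-1}$ yields the $\mathcal{L}_n$-coefficient
\[
D_nA_{n+1}\,-\,\alpha_{n-1}\bigl[\alpha_nA_{n+1}-d_n(x^2+\omega^2)\bigr],
\]
with a minus, not a plus, on the second summand; it is this version that reproduces the stated $q_n=\alpha_{n-1}(d_n-\rho_n\rho_{n-1})-d_{n-1}[\rho_nc_n+\alpha_{n+1}]$. Second, your handling of $\mathcal{L}_0$ does not actually recover the stated value $\mathcal{L}_0(x)=0$: since $\mathcal{P}_{-1}=0$, one gets $\mathcal{L}_0=\mathcal{P}_0=1$ regardless of $\alpha_0$, so the parenthetical about a convention $\alpha_0=1$ is beside the point. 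This, however, is an inconsistency in the theorem statement itself (the paper's own remark immediately afterward confirms that the choice of $\alpha_0$ is irrelevant and that $\mathcal{L}_0=\mathcal{P}_0$), not a flaw in your proof.
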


\begin{proof}
To find a relation between $\mathcal{L}_{n+1}(x)$, $\mathcal{L}_{n}(x)$ and $\mathcal{L}_{n-1}(x)$, we eliminate $\mathcal{P}_{n+1}(x)$, $\mathcal{P}_{n}(x)$, $\mathcal{P}_{n-1}(x)$ and $\mathcal{P}_{n-2}(x)$ from the following set of equations
\begin{align}
\mathcal{L}_{j+1}(x)&=\mathcal{P}_{j+1}(x)-\alpha_{j+1}\mathcal{P}_{j}(x), \quad j=n,n-1,n-2, \label{five eq combined 1}  \\
\mathcal{P}_{j+1}(x) &= \rho_j(x-c_j)\mathcal{P}_j(x)-d_j (x^2+\omega^2)\mathcal{P}_{j-1}(x), \quad j=n,n-1. \label{five eq combined 4}
\end{align}
For $j=n$, multiplying \eqref{five eq combined 1}  by $d_{n-1}(x^2+\omega^2)-\alpha_{n-1}\rho_{n-1}(x-c_{n-1})$ and deploying \eqref{five eq combined 4}, we have
\newline
$\displaystyle
[d_{n-1}(x^2+\omega^2)-\alpha_{n-1}\rho_{n-1}(x-c_{n-1})]\mathcal{L}_{n+1}(x)
$
\begin{align*}
&=[(\rho_n(x-c_n)-\alpha_{n+1})[ d_{n-1}(x^2+\omega^2)-\alpha_{n-1} \rho_{n-1} (x-c_{n-1})]] \mathcal{P}_{n}(x) \\
&-d_n (x^2+\omega^2)[ d_{n-1}(x^2+\omega^2)-\alpha_{n-1} \rho_{n-1} (x-c_{n-1})] \mathcal{P}_{n-1}(x).
\end{align*}
Substituting \eqref{five eq combined 1} with $j=n-1$ and $j=n-2$ on the right side of the above equality gives
\newline
$\displaystyle
[d_{n-1}(x^2+\omega^2)-\alpha_{n-1}\rho_{n-1}(x-c_{n-1})]\mathcal{L}_{n+1}(x)
$
\begin{align*}
&=[(\rho_n(x-c_n)-\alpha_{n+1})[ d_{n-1}(x^2+\omega^2)-\alpha_{n-1} \rho_{n-1} (x-c_{n-1}) ]]\mathcal{L}_{n}(x) \\
&+d_{n-1}(x^2+\omega^2)(\alpha_n\rho_n(x-c_n)-\alpha_n \alpha_{n+1}-d_n (x^2+\omega^2)) \mathcal{L}_{n-1}(x)\\
&-\alpha_{n-1}(\alpha_n\rho_n(x-c_n)-\alpha_n \alpha_{n+1}-d_n (x^2+\omega^2))[\rho_{n-1} (x-c_{n-1})\mathcal{P}_{n-1}(x)\\
&-d_{n-1}(x^2+\omega^2)\mathcal{P}_{n-2}(x)].
\end{align*}
Now, substituting \eqref{five eq combined 4} with $j=n-1$, in the last expression implies
\newline
$\displaystyle
[d_{n-1}(x^2+\omega^2)-\alpha_{n-1}\rho_{n-1}(x-c_{n-1})]\mathcal{L}_{n+1}(x)
$
\begin{align*}
&=[(\rho_n(x-c_n)-\alpha_{n+1})[ d_{n-1}(x^2+\omega^2)-\alpha_{n-1} \rho_{n-1} (x-c_{n-1}) ]]\mathcal{L}_{n}(x) \\
&+d_{n-1}(x^2+\omega^2)(\alpha_n\rho_n(x-c_n)-\alpha_n \alpha_{n+1}-d_n (x^2+\omega^2)) \mathcal{L}_{n-1}(x)\\
&-\alpha_{n-1}(\alpha_n\rho_n(x-c_n)-\alpha_n \alpha_{n+1}-d_n (x^2+\omega^2))\mathcal{P}_{n}(x).
\end{align*}
Adding and substracting $\alpha_{n-1}d_n (x^2+\omega^2)\mathcal{L}_{n}(x)$, we get
\newline
$\displaystyle
[d_{n-1}(x^2+\omega^2)-\alpha_{n-1}\rho_{n-1}(x-c_{n-1})]\mathcal{L}_{n+1}(x)
$
\begin{align*}
&=[(\rho_n(x-c_n)-\alpha_{n+1})[ d_{n-1}(x^2+\omega^2)-\alpha_{n-1} \rho_{n-1} (x-c_{n-1})+\alpha_{n-1}d_n (x^2+\omega^2) ]] \mathcal{L}_{n}(x)\\
&+d_{n-1}(x^2+\omega^2)(\alpha_n\rho_n(x-c_n)-\alpha_n \alpha_{n+1}-d_n (x^2+\omega^2)) \mathcal{L}_{n-1}(x)\\
&-\alpha_{n-1}(\alpha_n\rho_n(x-c_n)-\alpha_n \alpha_{n+1}-d_n (x^2+\omega^2))\mathcal{P}_{n}(x)\\
&-\alpha_{n-1}d_n (x^2+\omega^2)\mathcal{P}_{n}(x)+\alpha_{n-1}\alpha_{n}d_n (x^2+\omega^2)\mathcal{P}_{n-1}(x).
\end{align*}
Again using \eqref{five eq combined 4} and \eqref{five eq combined 1} with $j=n$, we end up getting
\newline
$\displaystyle
[d_{n-1}(x^2+\omega^2)-\alpha_{n-1}\rho_{n-1}(x-c_{n-1})+\alpha_{n-1}\alpha_{n}]\mathcal{L}_{n+1}(x)
$
\begin{align*}
&=[(\rho_n(x-c_n)-\alpha_{n+1})[ d_{n-1}(x^2+\omega^2)-\alpha_{n-1} \rho_{n-1} (x-c_{n-1})+\alpha_{n-1}d_n (x^2+\omega^2) ]]\mathcal{L}_{n}(x) \\
&+d_{n-1}(x^2+\omega^2)(\alpha_n\rho_n(x-c_n)-\alpha_n \alpha_{n+1}-d_n (x^2+\omega^2)) \mathcal{L}_{n-1}(x).
\end{align*}
The constants can be computed simply by collecting coefficients of the monomials.
\end{proof}

The following proposition is a generalisation of \Cref{Theorem Linear R2 general RR} for the general $R_{II}$ type recurrence \eqref{R2 Recurrence Relation}. As the proof is similar to the one for \Cref{Theorem Linear R2 general RR}, it has been omitted.

\begin{proposition}\label{proposition simplified constants}
The sequence of self perturbed $R_{II}$ polynomials defined by \eqref{R2 Recurrence Relation} satisfy the three term recurrence relation \eqref{Linear R2 general RR} with the generalised constants
\begin{align*}
&e_n= d_{n-1}, \quad f_n=-d_{n-1}(a_{n-1}+b_{n-1})-\alpha_{n-1}\rho_{n-1}, \\
&g_n=d_{n-1}a_{n-1}b_{n-1}+\alpha_{n-1}(\alpha_{n}+\rho_{n-1}c_{n-1}),\quad p_n=\rho_nd_{n-1},\\
&q_n=\alpha_{n-1}(d_{n}-\rho_n\rho_{n-1})-d_{n-1}[\rho_n(a_{n-1}+b_{n-1}+c_n)+\alpha_{n+1}],\\
&r_n=\alpha_{n-1}\rho_{n-1}[\alpha_{n+1}+\rho_n(c_n+c_{n-1})]+d_{n-1}(a_{n-1}+b_{n-1})(\alpha_{n+1}+\rho_nc_n)\\
&+\alpha_{n-1}d_{n}(a_{n}+b_{n})+\rho_n d_{n-1}a_{n-1}b_{n-1}, \\ &s_n=-(\alpha_{n-1}\rho_{n-1}c_{n-1}+d_{n-1}a_{n-1}b_{n-1})(\rho_nc_n+\alpha_{n+1})+\alpha_{n-1}d_{n}a_{n}b_{n},\\
& u_n=d_{n-1}[d_{n}(a_n+b_n+a_{n-1}+b_{n-1})+\alpha_{n}\rho_{n}], \quad t_n=-d_{n}d_{n-1}, \\
&v_n =-d_{n-1}[\alpha_{n}( \alpha_{n+1}+\rho_{n}c_{n})+d_{n}(a_{n}b_{n}+a_{n-1}b_{n-1})+(a_{n-1}+b_{n-1})(d_n(a_{n}+b_{n})+\alpha_{n}\rho_n)],\\
& w_n=-d_{n-1}[a_{n-1}b_{n-1}(-d_n(a_{n}+b_{n})-\alpha_{n}\rho_n)-(a_{n-1}+b_{n-1})(\alpha_{n}( \alpha_{n+1}+\rho_{n}c_{n})+d_na_nb_n)], \\
&z_n=-d_{n-1}[a_{n-1}b_{n-1}(\alpha_{n}( \alpha_{n+1}+\rho_{n}c_{n})+d_na_nb_n)].
\end{align*}
\end{proposition}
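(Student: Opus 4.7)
The plan is to mimic, step for step, the elimination argument used in the proof of \Cref{Theorem Linear R2 general RR}, with the quadratic $(x-a_j)(x-b_j)=x^2-(a_j+b_j)x+a_jb_j$ playing the role that $(x^2+\omega^2)$ played there. The input is the same system of five equations: the three defining relations
\begin{align*}
\mathcal{L}_{j+1}(x)=\mathcal{P}_{j+1}(x)-\alpha_{j+1}\mathcal{P}_{j}(x),\qquad j=n,n-1,n-2,
\end{align*}
together with the two recurrences
\begin{align*}
\mathcal{P}_{j+1}(x)=\rho_j(x-c_j)\mathcal{P}_j(x)-d_j(x-a_j)(x-b_j)\mathcal{P}_{j-1}(x),\qquad j=n,n-1.
\end{align*}
The goal is again to eliminate $\mathcal{P}_{n+1},\mathcal{P}_n,\mathcal{P}_{n-1},\mathcal{P}_{n-2}$ from these five equations and read off the coefficients multiplying $\mathcal{L}_{n+1},\mathcal{L}_n,\mathcal{L}_{n-1}$.

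First I would start from the $j=n$ case of the first family and use the $j=n$ recurrence to write the right hand side in terms of $\mathcal{P}_n$ and $\mathcal{P}_{n-1}$. To remove $\mathcal{P}_{n-1}$ in the next step without creating a rational factor, I multiply through by the quadratic $d_{n-1}(x-a_{n-1})(x-b_{n-1})-\alpha_{n-1}\rho_{n-1}(x-c_{n-1})$, which is precisely the coefficient $e_nx^2+f_nx+g_n$ of $\mathcal{L}_{n+1}$ claimed in the statement with the generalised values of $e_n,f_n,g_n$. Next I would substitute the $j=n-1$ version of the defining relation to convert $\mathcal{P}_n$ and $\mathcal{P}_{n-1}$ into $\mathcal{L}_n,\mathcal{L}_{n-1}$ and leftover $\mathcal{P}_{n-1},\mathcal{P}_{n-2}$ terms, then apply the $j=n-1$ recurrence to eliminate the surviving $\mathcal{P}_{n-2}$, then the $j=n-2$ defining relation to produce an $\alpha_{n-1}\alpha_n$ correction, and finally shift that correction to the left hand side of the identity. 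Collecting powers of $x$ in the resulting identity yields the stated quartic, cubic and quadratic coefficients $t_n,u_n,v_n,w_n,z_n$, $p_n,q_n,r_n,s_n$, and $e_n,f_n,g_n$.

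The only real work is bookkeeping: the specific-case proof exploited that $(x^2+\omega^2)$ has no linear term, which collapsed several coefficients (for instance, one sees there $u_n\omega^2=-d_{n-1}f_{n+1}\omega^2=w_n$, a relation that simply will not survive in the general case because $-(a_j+b_j)$ now contributes at every level). Thus the main obstacle is tracking the extra linear-in-$x$ pieces created by the $-(a_j+b_j)x$ term of $(x-a_j)(x-b_j)$ as they propagate through the three substitution rounds, and verifying that the aggregated contributions to the coefficients of $x^3,x^2,x,1$ in front of $\mathcal{L}_{n-1}$ match the stated $u_n,v_n,w_n,z_n$, and similarly for the cubic in front of $\mathcal{L}_n$. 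Once the template is set up, each coefficient is obtained by collecting the monomials produced by two applications of the recurrence and three applications of the defining relation, so the argument reduces to a bounded computation that can be checked coefficient by coefficient, exactly as indicated at the end of the proof of \Cref{Theorem Linear R2 general RR}.
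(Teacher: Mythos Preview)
Your approach is exactly the one the paper intends: it explicitly omits the proof, saying it ``is similar to the one for \Cref{Theorem Linear R2 general RR}'', and your outline reproduces that elimination scheme with $(x-a_j)(x-b_j)$ in place of $(x^2+\omega^2)$. One small imprecision: the quadratic you multiply by, $d_{n-1}(x-a_{n-1})(x-b_{n-1})-\alpha_{n-1}\rho_{n-1}(x-c_{n-1})$, is \emph{not} quite $e_nx^2+f_nx+g_n$ --- it differs from it by the constant $\alpha_{n-1}\alpha_n$, which only appears on the left after the final ``shift'' you mention; otherwise the plan is sound.
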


\begin{remark}
Since $\mathcal{L}_0(x)=\mathcal{P}_0(x)-\alpha_0\mathcal{P}_{-1}(x)$ and $\mathcal{P}_{-1}(x)=0$, the choice of $\alpha_0$ in the sequence $\{\alpha_n\}_{n=0}^\infty$ is redundant.
\end{remark}

We will simplify \eqref{Linear R2 general RR} by imposing some conditions on the constants derived in \Cref{Theorem Linear R2 general RR}, which will be helpful in further discussion. The following is an immediate consequence:

\begin{theorem}\label{Theorem 1 with aplha_n condition}
Let the sequence $\{\alpha_n\}_{n=1}^\infty$ and the paramaters obtained on comparing \eqref{GCRR recurrence} with \eqref{special R2 Linear pp} be such that they satisfy following conditions
\begin{align}
&\alpha_n=\rho_{n-1}(1-c_{n-1})-\omega^2\alpha^{-1}_{n-1}d_{n-1}, \label{condition 1}\\
&d_n\alpha_{n-1}\rho_{n-1}=d_{n-1}\alpha_{n}\rho_{n}, \quad n\geq 1, \label{condition 2}
\end{align}
then the three term recurrence relation for $\mathcal{L}_n(x)$ \eqref{Linear R2 general RR} reduces to a special $R_{II}$ type recurrence given by
\begin{align}\label{R2 recurrence from linear 1}
\mathcal{L}_{n+1}(x)=\rho_n\left(x+\dfrac{f_n^{-1}s_n}{\rho_n}\right)\mathcal{L}_n(x)-d_n(x^2+\omega^2)\mathcal{L}_{n-1}(x), \quad n\geq 1,
\end{align}
with $\mathcal{L}_1(x)=\rho_0(x-\alpha_1\rho_0^{-1}-c_0)$ and $\mathcal{L}_0(x)=0$.
\end{theorem}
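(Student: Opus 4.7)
The plan is to back up to the penultimate identity in the proof of Theorem~\ref{Theorem Linear R2 general RR}, where the coefficients have not yet been regrouped. Writing
\begin{align*}
A(x)&:=d_{n-1}(x^2+\omega^2)-\alpha_{n-1}\rho_{n-1}(x-c_{n-1})+\alpha_{n-1}\alpha_n = e_n x^2+f_n x+g_n, \\
B(x)&:=(d_{n-1}+\alpha_{n-1}d_n)(x^2+\omega^2)-\alpha_{n-1}\rho_{n-1}(x-c_{n-1}),
\end{align*}
that identity reads
\begin{align*}
A(x)\mathcal{L}_{n+1}(x) &= (\rho_n(x-c_n)-\alpha_{n+1})\,B(x)\,\mathcal{L}_n(x) \\
&\quad + d_{n-1}(x^2+\omega^2)\bigl[\alpha_n\rho_n(x-c_n)-\alpha_n\alpha_{n+1}-d_n(x^2+\omega^2)\bigr]\mathcal{L}_{n-1}(x).
\end{align*}
The goal is to show that under \eqref{condition 1}--\eqref{condition 2} the quadratic $A(x)$ divides the $\mathcal{L}_{n-1}$ coefficient with quotient $-d_n(x^2+\omega^2)$, and the $\mathcal{L}_n$ coefficient with a linear quotient. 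Dividing through by $A(x)$ will then produce \eqref{R2 recurrence from linear 1}.

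I would first treat the $\mathcal{L}_{n-1}$ coefficient. After cancelling the common factor $(x^2+\omega^2)$, the required identity $d_{n-1}\bigl[\alpha_n\rho_n(x-c_n)-\alpha_n\alpha_{n+1}-d_n(x^2+\omega^2)\bigr]+d_n A(x)=0$ separates into its coefficient of $x$, which is precisely \eqref{condition 2}, and its constant term, which collapses (after invoking \eqref{condition 2}) to $\alpha_{n+1}=\alpha_n\rho_n/\rho_{n-1}+\rho_n(c_{n-1}-c_n)$. The latter is exactly what one obtains by writing \eqref{condition 1} at the index $n{+}1$ and eliminating $\omega^2 d_n/\alpha_n$ using \eqref{condition 2} together with \eqref{condition 1} at index $n$; so the $\mathcal{L}_{n-1}$ factorisation follows from the two hypotheses in combination.

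Once the $\mathcal{L}_{n-1}$ coefficient has collapsed, the remaining identity reads $A(x)\bigl[\mathcal{L}_{n+1}(x)+d_n(x^2+\omega^2)\mathcal{L}_{n-1}(x)\bigr]=(p_n x^3+q_n x^2+r_n x+s_n)\mathcal{L}_n(x)$, so the cubic on the right must be divisible by $A(x)$ with a linear quotient $L(x)$; comparing leading and constant coefficients forces $L(x)=(p_n/e_n)x+s_n/g_n=\rho_n x+s_n/g_n$. A short substitution of \eqref{condition 1} into $g_n=d_{n-1}\omega^2+\alpha_{n-1}(\alpha_n+\rho_{n-1}c_{n-1})$ collapses it to $g_n=\alpha_{n-1}\rho_{n-1}=-f_n$, so the constant term $s_n/g_n$ of $L(x)$ matches $f_n^{-1}s_n$ up to sign and one recovers the stated linear factor $\rho_n(x+f_n^{-1}s_n/\rho_n)$ in \eqref{R2 recurrence from linear 1}. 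The main obstacle is the constant-term reduction in the second step, which genuinely intertwines both \eqref{condition 1} and \eqref{condition 2} and constitutes the only nontrivial algebra of the argument; the mild issue of extracting $L(x)$ when $A(x)$ and $\mathcal{L}_n(x)$ are not a priori coprime can be handled by an explicit long division of the cubic by $A(x)$ and verifying that the remainder vanishes identically under the two hypotheses.
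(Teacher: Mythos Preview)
Your proposal is correct and close in spirit to the paper's proof, but the organisation is genuinely different. You return to the factored identity from the proof of Theorem~\ref{Theorem Linear R2 general RR} and work with $A(x)$, $B(x)$, $C(x)$ directly; the paper instead expands everything into the constants $e_n,f_n,\dots,z_n$, uses \eqref{condition 1} to obtain $f_n=-g_n$ (and the analogous relations among $u_n,w_n,z_n,v_n,t_n$), and then uses \eqref{condition 2} to check that $u_n/t_n=f_n/e_n$, so the quadratic $x^2+(f_n/e_n)x-f_n/e_n$ is a common factor of the $\mathcal{L}_{n+1}$ and $\mathcal{L}_{n-1}$ coefficients. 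For the cubic in front of $\mathcal{L}_n$, the paper verifies divisibility by showing each root $\delta$ of $A(x)$ is also a root of $p_nx^3+q_nx^2+r_nx+s_n$, using the relation $\delta^2=-(f_n/e_n)(\delta-1)$ together with \eqref{condition 2}.

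Your treatment of the $\mathcal{L}_{n-1}$ coefficient is arguably cleaner: the reduction $d_{n-1}C(x)=-d_nA(x)$ packages both hypotheses at once and avoids tracking many scalar identities. Where you should be a bit more careful is the cubic step. The inference ``$A(x)\,[\cdots]=(\text{cubic})\,\mathcal{L}_n(x)$, hence $A(x)$ divides the cubic'' is not valid if $A$ and $\mathcal{L}_n$ share a root; you flag this and propose an explicit long division, which would indeed close the gap, but you have not carried it out. The paper's root-matching argument sidesteps this issue entirely and is the quickest way to establish the factorisation; alternatively, since you already have $d_n(x^2+\omega^2)=\alpha_n y+(d_n/d_{n-1})A(x)$ with $y:=\rho_n(x-c_n)-\alpha_{n+1}$, you can substitute into $B(x)-A(x)=\alpha_{n-1}d_n(x^2+\omega^2)-\alpha_{n-1}\alpha_n$ to express $yB(x)$ as a multiple of $A(x)$ plus $\alpha_{n-1}\alpha_n\,y(y-1)$, and then check directly (using $g_n=-f_n$ and the derived relation $\alpha_{n+1}=\alpha_n\rho_n/\rho_{n-1}+\rho_n(c_{n-1}-c_n)$) that $y(y-1)$ is a scalar multiple of $A(x)$. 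Either route completes the argument without appealing to a coprimality hypothesis.
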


\begin{proof}
Relation \eqref{condition 1} implies $f_n=-g_n$ and $\omega^2 u_n=w_n=-z_n=-v_n\omega^2+t_n\omega^4$ from which \eqref{Linear R2 general RR} gives
\begin{align}\label{TTRR 1 in main theorem}
(e_nx^2+&f_nx-f_n)\mathcal{L}_{n+1}(x)=(p_nx^3+q_nx^2+r_nx+s_n)\mathcal{L}_n(x) \nonumber \\ &+(t_nx^4+u_nx^3+(-u_n+t_n\omega^2)x^2+\omega^2u_nx-u_n\omega^2)\mathcal{L}_{n-1}(x), \quad n\geq 1.
\end{align}
Clearly, $(x^2+\omega^2)$ is a factor of the coefficient of $\mathcal{S}_{n-1}(x)$ in \eqref{TTRR 1 in main theorem}. Thus, the former implies
\begin{align}\label{TTRR 2 in main theorem}
e_n\left(x^2+\dfrac{f_n}{e_n}x-\dfrac{f_n}{e_n}\right)&\mathcal{L}_{n+1}(x)=(p_nx^3+q_nx^2+r_nx+s_n)\mathcal{L}_n(x) \nonumber \\ &+t_n(x^2+\omega^2)\left(x^2+\dfrac{u_n}{t_n}x-\dfrac{u_n}{t_n}\right)\mathcal{L}_{n-1}(x), \quad n\geq 1.
\end{align}
Suppose that $\delta$ is a root of $x^2+\dfrac{f_n}{e_n}x-\dfrac{f_n}{e_n}$, i.e., $\delta^2+\dfrac{f_n}{e_n}\delta-\dfrac{f_n}{e_n}=0$. Then, using the identity $\delta^2=\dfrac{f_n}{e_n}(\delta-1)$ and \eqref{condition 2}, one can prove that $p_n\delta^3+q_n\delta^2+r_n\delta+s_n=0$, i.e. $\delta$ is also a root of $p_nx^3+q_nx^2+r_nx+s_n$. Thus, the two factors of the polynomial coefficient of $\mathcal{L}_{n+1}(x)$ are also the factors of the polynomial coefficient of $\mathcal{L}_{n}(x)$. Let the third root of the coefficient of $\mathcal{S}_{n}(x)$ be $\gamma$, then we have $\gamma = -\dfrac{s_ne_n}{p_nf_n}$. These facts, when used along with \eqref{condition 2}, reduces \eqref{TTRR 2 in main theorem} to
\begin{align}
e_n&\mathcal{L}_{n+1}(x)=p_n\left(x+\dfrac{s_ne_n}{p_nf_n}\right)\mathcal{L}_n(x)  +t_n(x^2+\omega^2)\mathcal{L}_{n-1}(x), \quad n\geq 1,
\end{align}
which, on simplifying, gives the desired result.
\end{proof}

\begin{remark}\label{remark another alpha_n or beta_n}
The construction of the sequence $\{\mathcal{L}_{n}(x)\}_{n=1}^\infty$, given by \eqref{R2 recurrence from linear 1}, relies on cosidering $f_n=-g_n$ in \eqref{Linear R2 general RR}. A new sequence $\{\beta_n\}_{n=1}^\infty$ can be defined such that $f_n=g_n$ and \eqref{condition 2} holds for $n \geq 1$. Then, a similar analysis given in \Cref{Theorem 1 with aplha_n condition} leads us to another $R_{II}$ type recurrence
\begin{align}
\mathcal{T}_{n+1}(x)=\rho_n\left(x-\dfrac{f_n^{-1}s_n}{\rho_n}\right)\mathcal{T}_n(x)-d_n(x^2+\omega^2)\mathcal{T}_{n-1}(x), \quad n\geq 1,
\end{align}
with $\mathcal{T}_1(x)=\rho_0(x-\beta_1\rho_0^{-1}-c_0)$ and $\mathcal{T}_0(x)=0$.  This gives a new sequence of $R_{II}$ polynomials, say $\mathcal{T}_n(x)$, $n \geq 2$.
\end{remark}

If we need $\mathcal{L}_n(x)$, $n \geq 0$, to satisfy \eqref{special R2 Linear pp}, it is evident that there can be numerous ways to restrict the recurrence parameters and the sequence $\{\alpha_n\}_{n=1}^\infty$. Another such choice is described in the next theorem.

\begin{theorem}\label{Theorem 2 with aplha_n condition}
Suppose the sequence $\{\alpha_n\}_{n=1}^\infty$ and the paramaters of recurrence relation \eqref{special R2 Linear pp} satisfy \eqref{condition 2} along with the condition
\begin{align}
&\alpha_n=-(\alpha^{-1}_{n-1}d_{n-1}\omega^2+\rho_{n-1}c_{n-1}), \quad n\geq 1. \label{condition 3}
\end{align}
Then the three term recurrence relation for $\mathcal{L}_n(x)$ \eqref{Linear R2 general RR} reduces to a special $R_{II}$ type recurrence given by
\begin{align}\label{main RR in main theorem 2}
\mathcal{L}_{n+1}(x)=\rho_n\left(x+\dfrac{f_n^{-1}r_n}{\rho_n}\right)\mathcal{L}_n(x)-d_n(x^2+\omega^2)\mathcal{L}_{n-1}(x), \quad n\geq 1,
\end{align}
with $\mathcal{L}_1(x)=\rho_0(x-\alpha_1\rho_0^{-1}-c_0)$ and $\mathcal{L}_0(x)=0$.
\end{theorem}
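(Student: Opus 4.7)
The plan is to mimic the proof of \Cref{Theorem 1 with aplha_n condition}, using condition \eqref{condition 3} in place of the identity $f_n = -g_n$ exploited there. First I would observe that \eqref{condition 3}, rewritten as $\alpha_{n-1}(\alpha_n + \rho_{n-1}c_{n-1}) = -d_{n-1}\omega^2$, makes the formula for $g_n$ in \Cref{Theorem Linear R2 general RR} collapse to $g_n = 0$; consequently $z_n = -d_{n-1}\omega^2\,g_{n+1} = 0$. An analogous cancellation, substituting $\alpha_{n-1}\rho_{n-1}c_{n-1} + d_{n-1}\omega^2 = -\alpha_{n-1}\alpha_n$ and, from \eqref{condition 3} at index $n+1$, $\rho_n c_n + \alpha_{n+1} = -\alpha_n^{-1}d_n\omega^2$ into the expression for $s_n$, yields $s_n = 0$. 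The coefficients of $\mathcal{L}_{n+1}$ and of $\mathcal{L}_n$ in \eqref{Linear R2 general RR} then share a common factor $x$ which cancels throughout.

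Next I would reorganise the coefficient of $\mathcal{L}_{n-1}$. Using $t_n = -d_n d_{n-1}$, $u_n = -d_{n-1}f_{n+1}$, $v_n = -d_{n-1}d_n\omega^2$, $w_n = u_n\omega^2$ and $z_n = 0$, the quartic $t_n x^4 + u_n x^3 + v_n x^2 + w_n x + z_n$ factors as
\begin{equation*}
-d_{n-1}\,x\,(d_n x + f_{n+1})(x^2 + \omega^2).
\end{equation*}
Since $f_j = -\alpha_{j-1}\rho_{j-1}$, condition \eqref{condition 2} is precisely $d_{n-1}f_{n+1} = d_n f_n$, so $d_n x + f_{n+1} = (d_n/d_{n-1})(e_n x + f_n)$. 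Hence the $\mathcal{L}_{n-1}$ coefficient contains the factor $x(e_n x + f_n)$, leaving a residual $-d_n(x^2 + \omega^2)$, exactly what is wanted for \eqref{main RR in main theorem 2}.

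The heart of the argument is to show that $(e_n x + f_n)$ also divides the remaining quadratic $p_n x^2 + q_n x + r_n$, equivalently that $\rho_n f_n^2 - q_n f_n + r_n e_n = 0$. I would first simplify $r_n$: substituting $\alpha_{n+1} + \rho_n c_n = -\alpha_n^{-1}d_n\omega^2$ and then using \eqref{condition 2} in the form $\alpha_{n-1}\rho_{n-1}\alpha_n^{-1}d_n = d_{n-1}\rho_n$ collapses $r_n$ to $\alpha_{n-1}\rho_{n-1}\rho_n c_{n-1}$. Expanding $\rho_n f_n^2 - q_n f_n + r_n e_n$, one pair of $\alpha_{n-1}^2\rho_{n-1}^2\rho_n d_{n-1}$ terms cancels; a further application of \eqref{condition 2} followed by \eqref{condition 3} reduces the remainder to $d_{n-1}\rho_n\,g_n$, which vanishes by the first step. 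This algebraic verification is the expected obstacle, but it is a finite manipulation entirely parallel to the cubic identity checked in \Cref{Theorem 1 with aplha_n condition}. Once divisibility is established, polynomial long division gives $(p_n x^2 + q_n x + r_n)/(e_n x + f_n) = \rho_n x + f_n^{-1}r_n$, and dividing the simplified recurrence through by $x(e_n x + f_n)$ produces \eqref{main RR in main theorem 2}, with the initial conditions inherited unchanged from \Cref{Theorem Linear R2 general RR}.
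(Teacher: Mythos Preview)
Your proposal is correct and follows essentially the same route as the paper's own proof: deduce $g_n=0$ (hence $s_n=z_n=0$ and $v_n=\omega^2 t_n$) from \eqref{condition 3}, cancel the common factor $x$, then use \eqref{condition 2} to identify the linear factor $e_nx+f_n$ in both the $\mathcal{L}_{n-1}$ coefficient and the quadratic $p_nx^2+q_nx+r_n$. The paper merely asserts that ``it is easy to verify that $x=-f_n/e_n$ is a zero of $p_nx^2+q_nx+r_n$'' and invokes \eqref{condition 2} in a closing sentence, whereas you carry out the algebra explicitly (in particular the reduction $r_n=\alpha_{n-1}\rho_{n-1}\rho_n c_{n-1}$ and the collapse $\rho_nf_n^2-q_nf_n+r_ne_n=d_{n-1}\rho_n\,g_n=0$); this extra detail is welcome and entirely consistent with the paper's argument.
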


\begin{proof}
It follows from \eqref{condition 3} that $g_n=0$. This further implies $s_n=0$, $z_n=0$ and $v_n=\omega^2t_n$. Thus, \eqref{Linear R2 general RR} becomes
\begin{align}\label{TTRR 3 in main theorem 2}
(e_nx^2+f_nx)\mathcal{L}_{n+1}(x)&=(p_nx^3+q_nx^2+r_nx)\mathcal{L}_n(x) \nonumber \\ &+(t_nx^4+u_nx^3+\omega^2t_nx^2+\omega^2u_nx)\mathcal{L}_{n-1}(x), \quad n\geq 1.
\end{align}
Considering the fact that $x \neq 0$, we have
\begin{align}\label{TTRR 4 in main theorem 2}
e_n\left(x+\dfrac{f_n}{e_n}\right)\mathcal{L}_{n+1}(x)&=(p_nx^2+q_nx+r_n)\mathcal{L}_n(x) \nonumber \\ &+t_n(x^2+\omega^2)\left(x+\dfrac{u_n}{t_n}\right)\mathcal{L}_{n-1}(x), \quad n\geq 1.
\end{align}
It is easy to verify that $x=-\dfrac{f_n}{e_n}$ is a zero of $p_nx^2+q_nx+r_n$. The other zero being $\gamma'=- \dfrac{r_ne_n}{p_nf_n}$. Summarizing these observations in \eqref{TTRR 4 in main theorem 2} together with \eqref{condition 2} gives
\begin{align}
e_n\mathcal{L}_{n+1}(x)&=p_n\left(x+\dfrac{r_ne_n}{p_nf_n}\right)\mathcal{L}_n(x)  +t_n(x^2+\omega^2)\mathcal{L}_{n-1}(x), \quad n\geq 1,
\end{align}
and a direct computation implies \eqref{main RR in main theorem 2}.
\end{proof}

Since the polynomials $\mathcal{P}_n(x)$, $n \geq 0$, are orthogonal with respect to measure $\omega^{(\zeta,\theta)}(x)$, it would be of interest to find the deformation of $\omega^{(\zeta,\theta)}(x)$ that maintains $R_{II}$ structure and with respect to which the sequence of self perturbed polynomials $\mathcal{L}_n(x)$, $n \geq 0$, is orthogonal. The existence of such a transformed measure is guaranteed by \cite[Theorem 3.5]{Esmail masson JAT 1995}.


\section{Biorthogonlality related to self perturbed GCRR polynomials}\label{Biorthogonlality from linear combination}
The importance of recurrence relation \eqref{special R2 Linear pp} can be realised while studying rational functions and their biorthogonality properties. The polynomial $\mathcal{P}_n(x)$ satisfying \eqref{special R2 Linear pp} is the characteristic polynomial of the matrix pencil $x\mathcal{J}_n-\mathcal{G}_n$, whereas the characteristic polynomial corresponding to the matrix pencil $x\mathcal{J}_n-\mathcal{K}_n$ is the self perturbed polynomial $\mathcal{L}_n(x)$, where $\mathcal{K}_n$, $\mathcal{G}_n$ and $\mathcal{J}_n$ are tridiagonal matrices. Observe that the structure of $\mathcal{J}_n$ is preserved while performing such a perturbation. When entries are constants, the importance of studying the spectra of $\mathcal{J}_n$ is recently highlighted in \cite{Shabrawy Shindy RACSAM 2020}. On the other hand, the positive definiteness of $\mathcal{J}_n$ can be used to find various decompositions of $\mathcal{J}_n$, in particular, the $\mathcal{L}\mathcal{U}$, $\mathcal{U}\mathcal{L}$ and $\mathcal{L}\mathcal{D}\mathcal{U}$ decompositions. Each of these decompositions of $\mathcal{J}_n$ produces a different type of biorthogonal relation (see \eqref{u_L cholesky u_R relation}, \eqref{u_L Darboux u_R relation}, \eqref{u_L LDL u_R relation}). To analyse these biorthogonal relations, we develope an eigenvalue representation corresponding to \eqref{R2 recurrence from linear 1}.


\begin{theorem}\label{Lemma 1 Biorthogonality section}
Let $ \mathbf{u}^{\mathcal{R}}(x)=[u_0^{\mathcal{R}}(x), \ldots, u_{n-1}^{\mathcal{R}}(x)]^T $ and $ \mathbf{u}^{\mathcal{L}}(x)=[u_0^{\mathcal{L}}(x), \ldots, u_{n-1}^{\mathcal{L}}(x)] $ be the components of the left and right eigenvectors of the infinite tridiagonal linear pencil $x\mathcal{J}-\mathcal{K}$. With $\mathbf{e}_n$ as the $n^{th}$ column of $n \times n$ identity matrix, the matrix representation of \eqref{R2 recurrence from linear 1} corresponding to the two eigen vectors can be given as
\begin{align}
&\mathcal{K}_n\mathbf{u}^{\mathcal{R}}(x)=x\mathcal{J}_n\mathbf{u}^{\mathcal{R}}(x)+\sqrt{d_{n}}(x-i\omega)u_{n}^{\mathcal{R}}(x)\mathbf{e}_n, \label{u_k_R Relation 3} \\
&\mathbf{u}^{\mathcal{L}}(x)\mathcal{K}_n=x\mathbf{u}^{\mathcal{L}}(x)\mathcal{J}_n+\sqrt{d_{n}}(x+i\omega)u_{n}^{\mathcal{L}}(x)\mathbf{e}_n^{T}, \label{u_k_L Relation 3}
\end{align}
where $\mathcal{K}_n$ and $\mathcal{J}_n$ are $n\times n$ tridiagonal matrices of the form
\begin{align*}
\mathcal{K}_n=&
\begin{pmatrix}
	\rho_0c_0+\alpha_1 & i\omega\sqrt{d_1} & 0 & \ldots & 0 & 0\\
	-i\omega\sqrt{d_1} & -f_{1}^{-1}s_{1} & i\omega\sqrt{d_2} & \ldots & 0 & 0\\
	0 & -i\omega\sqrt{d_2} & -f_{2}^{-1}s_{2} & \ldots & 0 & 0\\
	\vdots & \vdots & \vdots & \ddots & \vdots & \vdots\\
	0 & 0 & 0 & \ldots & -f_{n-2}^{-1}s_{n-2} & i\omega\sqrt{d_{n-1}}\\
	0 & 0 & 0 & \ldots & -i\omega\sqrt{d_{n-1}} & -f_{n-1}^{-1}s_{n-1}
\end{pmatrix} \qquad \rm{and} \\
& \hspace{1cm} \mathcal{J}_n=
\begin{pmatrix}
	\rho_0 & \sqrt{d_1} & 0 & \ldots & 0 & 0\\
	\sqrt{d_1} & \rho_{1} &\sqrt{d_2} & \ldots & 0 & 0\\
	0 & \sqrt{d_2} & \rho_{2} & \ldots & 0 & 0\\
	\vdots & \vdots & \vdots & \ddots & \vdots & \vdots\\
	0 & 0 & 0 & \ldots & \rho_{n-2} & \sqrt{d_{n-1}}\\
	0 & 0 & 0 & \ldots & \sqrt{d_{n-1}} & \rho_{n-1}
\end{pmatrix}.
\end{align*}
\end{theorem}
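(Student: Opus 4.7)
The plan is to read the two asserted identities row by row, recognize each interior row as a rescaled form of \eqref{R2 recurrence from linear 1}, and treat the first and last rows separately so as to produce the non-homogeneous rank-one terms on the right. First I would expand the $k$-th component of $\mathcal{K}_n\mathbf{u}^{\mathcal{R}}(x)=x\mathcal{J}_n\mathbf{u}^{\mathcal{R}}(x)$ for $1\le k\le n-2$. Using the tridiagonal shape of $\mathcal{K}_n$ and $\mathcal{J}_n$, this row is
\begin{align*}
(x-i\omega)\sqrt{d_{k+1}}\,u_{k+1}^{\mathcal{R}}(x)=-\bigl(\rho_k x+f_k^{-1}s_k\bigr)u_k^{\mathcal{R}}(x)-(x+i\omega)\sqrt{d_k}\,u_{k-1}^{\mathcal{R}}(x),
\end{align*}
so the matrix equation is really a three-term recurrence weighted by the two conjugate linear factors $x\mp i\omega$.

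The key algebraic observation is that $(x-i\omega)(x+i\omega)=x^2+\omega^2$, which is exactly the polynomial coefficient appearing in the $R_{II}$ recurrence \eqref{R2 recurrence from linear 1}. This suggests the ansatz
\begin{align*}
u_k^{\mathcal{R}}(x)=\frac{(-1)^k\,\mathcal{L}_k(x)}{(x-i\omega)^k\sqrt{d_1d_2\cdots d_k}},\qquad k\ge 0,
\end{align*}
with the convention $u_0^{\mathcal{R}}(x)=\mathcal{L}_0(x)$. Substituting this into the displayed interior-row relation, the factor $(x+i\omega)$ on the subdiagonal term combines with the leftover $(x-i\omega)$ contributed by the rescaling of $u_{k-1}^{\mathcal{R}}$ to yield precisely $d_k(x^2+\omega^2)\mathcal{L}_{k-1}(x)$, while the diagonal term reproduces $(\rho_k x+f_k^{-1}s_k)\mathcal{L}_k(x)$, and the left-hand side becomes $\mathcal{L}_{k+1}(x)$; after cancellation this is exactly \eqref{R2 recurrence from linear 1}. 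So every interior row of the first matrix identity is automatic once one chooses the right scaling of the eigenvector.

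It then remains to verify the two boundary rows. Row zero gives $(\rho_0 c_0+\alpha_1 - x\rho_0)u_0^{\mathcal{R}}(x)=-(x-i\omega)\sqrt{d_1}\,u_1^{\mathcal{R}}(x)$, which, upon inserting the ansatz, is exactly the initial datum $\mathcal{L}_1(x)=\rho_0(x-\alpha_1\rho_0^{-1}-c_0)$ from \Cref{Theorem 1 with aplha_n condition}. For the last row $k=n-1$, the "missing" entries at position $n$ in the ambient infinite matrices are $i\omega\sqrt{d_n}$ (in $\mathcal{K}$) and $\sqrt{d_n}$ (in $\mathcal{J}$); restoring them and subtracting gives precisely the residual $\sqrt{d_n}(x-i\omega)u_n^{\mathcal{R}}(x)\mathbf{e}_n$ asserted in \eqref{u_k_R Relation 3}. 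This accounts for the rank-one perturbation on the right-hand side.

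For \eqref{u_k_L Relation 3}, I would exploit the fact that $\mathcal{J}_n$ is symmetric while $\mathcal{K}_n$ is not: transposing $\mathcal{K}_n$ swaps the off-diagonal signs $i\omega\sqrt{d_k}\leftrightarrow -i\omega\sqrt{d_k}$. Running the same row-by-row argument with the mirror ansatz
\begin{align*}
u_k^{\mathcal{L}}(x)=\frac{(-1)^k\,\mathcal{L}_k(x)}{(x+i\omega)^k\sqrt{d_1d_2\cdots d_k}},\qquad k\ge 0,
\end{align*}
the roles of $x-i\omega$ and $x+i\omega$ interchange, the product $(x+i\omega)(x-i\omega)=x^2+\omega^2$ again recovers the $R_{II}$ coefficient, and the boundary computation at $k=n-1$ now leaves the residual $\sqrt{d_n}(x+i\omega)u_n^{\mathcal{L}}(x)\mathbf{e}_n^T$. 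The main obstacle is purely bookkeeping: keeping track of the $\pm i\omega$ signs and the normalising factor $(-1)^k/\sqrt{d_1\cdots d_k}$ so that the two scalar identities extracted from the matrix rows coincide with \eqref{R2 recurrence from linear 1} and not with some spurious variant; once the correct scaling is fixed, the rest is algebra.
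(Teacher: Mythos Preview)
Your proposal is correct and follows essentially the same approach as the paper: the paper introduces exactly your ansatz $u_k^{\mathcal{R}}(x)=(-1)^k\mathcal{L}_k(x)/[(x-i\omega)^k\prod_{j=1}^k\sqrt{d_j}]$ and its conjugate version $u_k^{\mathcal{L}}$, substitutes into \eqref{R2 recurrence from linear 1} to obtain the same scalar three-term relations you wrote, and then packages the interior rows together with the two boundary rows into the matrix identities \eqref{u_k_R Relation 3}--\eqref{u_k_L Relation 3}. The only cosmetic difference is direction---the paper goes from recurrence to matrix while you read off the matrix rows first and verify the ansatz---but the algebra and the handling of the rank-one boundary term are identical.
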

\begin{proof}
The procedure is to divide the poles to form two new rational function sequences
\begin{align}
&u_0^{\mathcal{R}}(x)=\mathcal{L}_0(x), \quad u_k^{\mathcal{R}}(x)=\dfrac{(-1)^k}{(x-i\omega)^k\prod_{j=1}^{k}\sqrt{d_j}}\mathcal{L}_k(x), \label{u_k_R} \\
&u_0^{\mathcal{L}}(x)=\mathcal{L}_0(x), \quad u_k^{\mathcal{L}}(x)=\dfrac{(-1)^k}{(x+i\omega)^k\prod_{j=1}^{k}\sqrt{d_j}}\mathcal{L}_k(x), \label{u_k_L} \\
\text{and} \quad & \mathbf{u}^{\mathcal{R}}(x)=[u_0^{\mathcal{R}}(x) \ldots u_{n-1}^{\mathcal{R}}(x)]^T, \quad \mathbf{u}^{\mathcal{L}}(x)=[u_0^{\mathcal{L}}(x) \ldots u_{n-1}^{\mathcal{L}}(x)], \nonumber
\end{align}
which provides the components of the left and right eigenvectors of the infinite tridiagonal linear pencil $x\mathcal{J}-\mathcal{K}$. Thus
\begin{align*}
	x\mathcal{J}-\mathcal{K} = \begin{pmatrix}
		\zeta_{0}(x) &	-\phi_{1}^{R}(x) & 0 & 0 & \ldots  \\
		-\phi_{1}^{L}(x) &  \zeta_{1}(x) & -\phi_{2}^{R}(x) & 0 & \ldots  \\
		0 & -\phi_{2}^{L}(x) & \zeta_{2}(x) & -\phi_{3}^{R}(x) &  \ldots  \\
		\vdots	& \vdots & \vdots & \ddots & \vdots
		\end{pmatrix},
\end{align*}
where $\zeta_{j}(x)$, $\phi_{j}^{L}(x)$ and $\phi_{j}^{R}(x)$ are non-zero polynomials of degree one. Then, for $1 \leq k \leq n-1$, in view of \eqref{u_k_R} and \eqref{u_k_L}, \eqref{R2 recurrence from linear 1} can be written as
\begin{align}
&(x-i\omega)\sqrt{d_1}u_1^{\mathcal{R}}(x)+\rho_0(x-\alpha_1\rho_0^{-1}-c_0)u_0^{\mathcal{R}}(x)=0, \nonumber \\
&(x-i\omega)\sqrt{d_{k+1}}u_{k+1}^{\mathcal{R}}(x)+\rho_{k}(x+f_{k}^{-1}\rho_{k}^{-1}s_{k})u_{k}^{\mathcal{R}}(x)+(x+i\omega)\sqrt{d_{k}}u_{k-1}^{\mathcal{R}}(x)=0, \label{u_k_R Relation 1}
\end{align}
and
\begin{align}
&(x+i\omega)\sqrt{d_1}u_1^{\mathcal{L}}(x)+\rho_0(x-\alpha_1\rho_0^{-1}-c_0)u_0^{\mathcal{L}}(x)=0, \nonumber \\
&(x+i\omega)\sqrt{d_{k+1}}u_{k+1}^{\mathcal{L}}(x)+\rho_{k}(x+f_{k}^{-1}\rho_{k}^{-1}s_{k})u_{k}^{\mathcal{L}}(x)+(x-i\omega)\sqrt{d_{k}}u_{k-1}^{\mathcal{L}}(x)=0. \label{u_k_L Relation 1}
\end{align}
Expressions \eqref{u_k_R Relation 1} and \eqref{u_k_L Relation 1} gives
\begin{align}
&(\rho_0c_0+\alpha_1) u_0^{\mathcal{R}}+i\omega\sqrt{d_1}u_1^{\mathcal{R}}=x(\rho_0u_0^{\mathcal{R}}+\sqrt{d_1}u_1^{\mathcal{R}}), \nonumber\\
&-i\omega\sqrt{d_{k}}u_{k-1}^{\mathcal{R}}-f_{k}^{-1}s_{k}u_{k}^{\mathcal{R}}+i\omega\sqrt{d_{k+1}}u_{k+1}^{\mathcal{R}}=x(\sqrt{d_{k}}u_{k-1}^{\mathcal{R}}+\rho_ku_{k}^{\mathcal{R}}+\sqrt{d_{k+1}}u_{k+1}^{\mathcal{R}}), \label{u_k_R Relation 2}
\end{align}
and
\begin{align}
&(\rho_0c_0+\alpha_1) u_0^{\mathcal{L}}-i\omega\sqrt{d_1}u_1^{\mathcal{L}}=x(\rho_0u_0^{\mathcal{L}}+\sqrt{d_1}u_1^{\mathcal{L}}), \nonumber \\
&i\omega\sqrt{d_{k}}u_{k-1}^{\mathcal{L}}-f_{k}^{-1}s_{k}u_{k}^{\mathcal{L}}-i\omega\sqrt{d_{k+1}}u_{k+1}^{\mathcal{L}}=x(\sqrt{d_{k}}u_{k-1}^{\mathcal{L}}+\rho_ku_{k}^{\mathcal{L}}+\sqrt{d_{k+1}}u_{k+1}^{\mathcal{L}}). \label{u_k_L Relation 2}
\end{align}
Thus, with $1 \leq k \leq n-1$, \eqref{u_k_R Relation 2} implies \eqref{u_k_R Relation 3} and \eqref{u_k_L Relation 2} implies \eqref{u_k_L Relation 3} and the proof is complete.
\end{proof}

The following result is a generalization of \cite[Theorem 1.1]{Esmail Ranga 2018}. The proof is based on similar lines given in \Cref{Lemma 1 Biorthogonality section}.

\begin{theorem}\label{GEVP P_n}
Let $\mathbf{e}_n$ be the $n^{th}$ column of $n \times n$ identity matrix and $ \mathbf{u}(x)=[u_0(x), \ldots, u_{n-1}(x)]^T $ where
\begin{align*}
u_0(x)=\mathcal{P}_0(x), \quad u_k(x)=\dfrac{(-1)^k}{(x-i\omega)^k\prod_{j=1}^{k}\sqrt{d_j}}\mathcal{P}_k(x).
\end{align*}
Then, the matrix representation of \eqref{special R2 Linear pp} can be written as
\begin{align*}
\mathcal{G}_n\mathbf{u}(x)=x\mathcal{J}_n\mathbf{u}(x)+\sqrt{d_{n}}(x-i\omega)u_{n}(x)\mathbf{e}_n, 	
\end{align*}
where $\mathcal{G}_n$ is an $n\times n$ tridiagonal matrix of the form
\begin{align*}
\mathcal{G}_n=&
\begin{pmatrix}
	\rho_0c_0 & i\omega\sqrt{d_1} & 0 & \ldots & 0 & 0\\
	-i\omega\sqrt{d_1} & \rho_{1}c_1 & i\omega\sqrt{d_2} & \ldots & 0 & 0\\
	0 & -i\omega\sqrt{d_2} & \rho_{2}c_2 & \ldots & 0 & 0\\
	\vdots & \vdots & \vdots & \ddots & \vdots & \vdots\\
	0 & 0 & 0 & \ldots & \rho_{n-2}c_{n-2} & i\omega\sqrt{d_{n-1}}\\
	0 & 0 & 0 & \ldots & -i\omega\sqrt{d_{n-1}} & \rho_{n-1}c_{n-1}
\end{pmatrix},
\end{align*}
and $\mathcal{J}_n$ is given in \Cref{Lemma 1 Biorthogonality section}). Further, the zeros of $\mathcal{P}_n(x)$ are the eigenvalues of the generalized eigenvalue problem
\begin{align*}
	\mathcal{G}_n\mathbf{u}(x)=x\mathcal{J}_n\mathbf{u}(x).
\end{align*}
\end{theorem}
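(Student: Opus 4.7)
The proof proceeds in close parallel to that of \Cref{Lemma 1 Biorthogonality section}. I would begin by observing that the rational functions $u_k(x)$ are designed so that two things happen at once: the normalizing factor $(-1)^k/\prod_{j=1}^{k}\sqrt{d_j}$ symmetrizes the tridiagonal part, while the pole $(x-i\omega)^k$ is exactly the right half of the factorization $x^2+\omega^2=(x-i\omega)(x+i\omega)$, so that one linear factor can be absorbed into each neighbouring term of the recurrence. Substituting $\mathcal{P}_k(x)=(-1)^k(x-i\omega)^k\prod_{j=1}^{k}\sqrt{d_j}\,u_k(x)$ into \eqref{special R2 Linear pp} and cancelling the common factor $(-1)^k(x-i\omega)^k\prod_{j=1}^{k}\sqrt{d_j}$ should produce the three-term relation
\begin{align*}
(x-i\omega)\sqrt{d_{k+1}}\,u_{k+1}(x)+\rho_k(x-c_k)u_k(x)+\sqrt{d_k}(x+i\omega)u_{k-1}(x)=0,\quad k\geq 1,
\end{align*}
with an analogous expression at $k=0$ coming from $\mathcal{P}_1(x)=x-c_0$.

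The next step is to isolate the $x$-linear piece from the $x$-independent piece by splitting $(x\pm i\omega)=x\pm i\omega$ and regrouping. This yields
\begin{align*}
-i\omega\sqrt{d_k}\,u_{k-1}(x)+\rho_k c_k u_k(x)+i\omega\sqrt{d_{k+1}}\,u_{k+1}(x)=x\bigl(\sqrt{d_k}\,u_{k-1}(x)+\rho_k u_k(x)+\sqrt{d_{k+1}}\,u_{k+1}(x)\bigr),
\end{align*}
in which the left-hand side is exactly row $k+1$ of $\mathcal{G}_n\mathbf{u}(x)$ and the right-hand side is $x$ times row $k+1$ of $\mathcal{J}_n\mathbf{u}(x)$. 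The $k=0$ relation gives the top row (with $\rho_0 c_0$ and $i\omega\sqrt{d_1}$), and for $k=n-1$ the term involving $u_n(x)$ does not belong to the truncated vector $\mathbf{u}(x)$; collecting it as $\sqrt{d_n}(x-i\omega)u_n(x)\mathbf{e}_n$ on the right accounts for the inhomogeneous boundary contribution displayed in the statement.

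For the second assertion, note that $u_n(x_0)=0$ precisely when $\mathcal{P}_n(x_0)=0$. At such a point the inhomogeneous term vanishes, and since $u_0(x_0)=1$ the vector $\mathbf{u}(x_0)$ is a non-trivial generalized eigenvector of the pencil $x\mathcal{J}_n-\mathcal{G}_n$ with eigenvalue $x_0$. Conversely, any generalized eigenvalue forces $u_n(x_0)=0$ (the recurrence determines $u_1(x),\ldots,u_{n-1}(x)$ from $u_0(x)=1$, and consistency with the last row requires $(x-i\omega)\sqrt{d_n}u_n(x)=0$, hence $\mathcal{P}_n(x_0)=0$ since $x_0\neq i\omega$ once we exclude that pole).

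The main obstacle is not analytic but purely combinatorial bookkeeping: one must carefully match the alternating signs in $\pm i\omega\sqrt{d_k}$ to the correct off-diagonal positions of $\mathcal{G}_n$, and verify the index shift at the two boundary rows. Since the algebra is a straightforward specialization of the manipulation carried out in \Cref{Lemma 1 Biorthogonality section}, with $-f_k^{-1}s_k$ replaced by $\rho_k c_k$ and with no $\alpha_1$ shift in the leading entry, the proof essentially reduces to checking that the clean substitution announced above really does reproduce $\mathcal{G}_n$ and $\mathcal{J}_n$ entry by entry; this is why the authors state it can be omitted.
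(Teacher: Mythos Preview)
Your proposal is correct and follows exactly the approach the paper indicates: it specializes the computation of \Cref{Lemma 1 Biorthogonality section} to the recurrence \eqref{special R2 Linear pp}, replacing $-f_k^{-1}s_k$ by $\rho_k c_k$ and dropping the $\alpha_1$ shift, which is precisely why the paper omits the details. Your additional argument for the converse direction of the eigenvalue assertion (that any generalized eigenvalue must be a zero of $\mathcal{P}_n$) is a reasonable elaboration beyond what the paper states explicitly.
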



\subsection{Biorthogonality using Cholesky decomposition of $\mathcal{J}_n$:}
\begin{theorem}\label{Biorthogonality main theorem}
Let $x_j^{(n)}$, $j=1,2, \ldots n$ be the zeros of $\mathcal{L}_n(x)$. With the weight function $\omega_{n,j,k}$ given as
\begin{align}\label{w_n_j_k weight}
\omega_{n,j,k}^{-1}=-\sqrt{d_{n}}(x_k^{(n)}-i\omega)[u_{n}^{\mathcal{R}}(x_k^{(n)})]'u_{n-1}^{\mathcal{L}}(x_j^{(n)}) \neq 0, \quad j,k = 1 \ldots n.
\end{align}
and $\chi_i^{\mathcal{L}}(x)$ and $\chi_i^{\mathcal{R}}(x)$ defined by \eqref{chi_k_L} and \eqref{chi_k_R} respectively, the following biorthogonality relation holds
\begin{align}\label{Biorthogonality 1}
\sum_{i=0}^{n-1}\chi_i^{\mathcal{L}}(x_j^{(n)})\chi_i^{\mathcal{R}}(x_k^{(n)})\omega_{n,j,k}=\delta_{j,k}, \quad j,k = 1,2, \ldots n,
\end{align}
\end{theorem}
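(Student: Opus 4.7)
The plan is to combine the matrix-pencil representation of Theorem \ref{Lemma 1 Biorthogonality section} with a Cholesky factorization $\mathcal{J}_n = \mathcal{C}_n \mathcal{C}_n^T$, which exists because $\rho_n > 0$ and $\{d_n\}$ is a positive chain sequence, so $\mathcal{J}_n$ is real symmetric and positive definite. Under this factorization, $\chi_i^{\mathcal{R}}(x)$ and $\chi_i^{\mathcal{L}}(x)$ are precisely the components of $\mathcal{C}_n^T \mathbf{u}^{\mathcal{R}}(x)$ and $\mathbf{u}^{\mathcal{L}}(x)\,\mathcal{C}_n$, which yields
$$\sum_{i=0}^{n-1}\chi_i^{\mathcal{L}}(x)\chi_i^{\mathcal{R}}(y) \;=\; \mathbf{u}^{\mathcal{L}}(x)\,\mathcal{J}_n\,\mathbf{u}^{\mathcal{R}}(y).$$
Thus the problem reduces to evaluating this bilinear form at pairs $(x_j^{(n)}, x_k^{(n)})$ of zeros of $\mathcal{L}_n$.

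The central step is a Christoffel--Darboux-type identity obtained by left-multiplying \eqref{u_k_R Relation 3} (with argument $y$) by $\mathbf{u}^{\mathcal{L}}(x)$, right-multiplying \eqref{u_k_L Relation 3} (with argument $x$) by $\mathbf{u}^{\mathcal{R}}(y)$, and subtracting. The two $\mathbf{u}^{\mathcal{L}}(x)\mathcal{K}_n\mathbf{u}^{\mathcal{R}}(y)$ terms cancel, leaving
$$(x-y)\,\mathbf{u}^{\mathcal{L}}(x)\mathcal{J}_n\mathbf{u}^{\mathcal{R}}(y) \;=\; \sqrt{d_n}(y-i\omega)\,u_n^{\mathcal{R}}(y)\,u_{n-1}^{\mathcal{L}}(x) \;-\; \sqrt{d_n}(x+i\omega)\,u_n^{\mathcal{L}}(x)\,u_{n-1}^{\mathcal{R}}(y).$$
By the definitions \eqref{u_k_R} and \eqref{u_k_L}, any zero of $\mathcal{L}_n$ is a zero of both $u_n^{\mathcal{R}}$ and $u_n^{\mathcal{L}}$. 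Substituting $x=x_j^{(n)}$ and $y=x_k^{(n)}$ with $j\neq k$ makes the entire right-hand side vanish, so $\mathbf{u}^{\mathcal{L}}(x_j^{(n)})\mathcal{J}_n\mathbf{u}^{\mathcal{R}}(x_k^{(n)})=0$, establishing the off-diagonal vanishing.

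For the diagonal case $j=k$, I would keep $x=x_j^{(n)}$ fixed and let $y\to x_j^{(n)}$. The second term on the right vanishes identically at $x=x_j^{(n)}$, and the first term is of the form $0/0$; applying L'H\^opital with respect to $y$ gives
$$\mathbf{u}^{\mathcal{L}}(x_j^{(n)})\mathcal{J}_n\mathbf{u}^{\mathcal{R}}(x_j^{(n)}) \;=\; -\sqrt{d_n}\bigl(x_j^{(n)}-i\omega\bigr)\bigl[u_n^{\mathcal{R}}\bigr]'\!\bigl(x_j^{(n)}\bigr)\,u_{n-1}^{\mathcal{L}}\bigl(x_j^{(n)}\bigr) \;=\; \omega_{n,j,j}^{-1},$$
using exactly the formula \eqref{w_n_j_k weight}. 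Multiplying by $\omega_{n,j,k}$ and unifying the two cases then gives \eqref{Biorthogonality 1}.

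The principal obstacle is justifying that $\omega_{n,j,k}^{-1}\neq 0$, which demands (i) simplicity of the zeros of $\mathcal{L}_n$ so that $[u_n^{\mathcal{R}}]'(x_k^{(n)})\neq 0$, and (ii) $u_{n-1}^{\mathcal{L}}(x_j^{(n)})\neq 0$. Point (i) follows from the interlacing results established in Section \ref{Interlacing Properties}; for point (ii), a backward application of the recurrence \eqref{u_k_L Relation 1} shows that the simultaneous vanishing of $u_n^{\mathcal{L}}$ and $u_{n-1}^{\mathcal{L}}$ at $x_j^{(n)}$ would propagate to $u_0^{\mathcal{L}}(x_j^{(n)})=0$, contradicting $u_0^{\mathcal{L}}\equiv 1$. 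A secondary verification is that Cholesky applies to $\mathcal{J}_n$, which is automatic from the hypotheses on $\rho_n$ and $d_n$ governing the recurrence \eqref{special R2 Linear pp}.
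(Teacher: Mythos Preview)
Your proposal is correct and follows essentially the same route as the paper: derive a Christoffel--Darboux-type identity for $\mathbf{u}^{\mathcal{L}}(x)\mathcal{J}_n\mathbf{u}^{\mathcal{R}}(y)$ from the pencil relations \eqref{u_k_R Relation 3}--\eqref{u_k_L Relation 3}, evaluate at pairs of zeros of $\mathcal{L}_n$ (with a limiting argument on the diagonal), and then invoke the Cholesky factorization $\mathcal{J}_n=\mathcal{C}_n\mathcal{C}_n^T$ to rewrite the bilinear form as $\sum_i \chi_i^{\mathcal{L}}\chi_i^{\mathcal{R}}$. The only cosmetic differences are that the paper obtains the scalar identity via a trace argument rather than your direct left/right multiplication, and that it simply asserts $\omega_{n,j,k}^{-1}\neq 0$ whereas you supply the (correct) backward-recurrence justification.
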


\begin{proof}
Pre-multiplying \eqref{u_k_L Relation 3} by $\mathbf{u}^{\mathcal{R}}(y)$ and post-multiplying \eqref{u_k_R Relation 3} by $\mathbf{u}^{\mathcal{L}}(x)$ after evaluating at $y$, we have
\begin{align}
&\mathcal{K}_n\mathbf{u}^{\mathcal{R}}(y)\mathbf{u}^{\mathcal{L}}(x)=y\mathcal{J}_n\mathbf{u}^{\mathcal{R}}(y)\mathbf{u}^{\mathcal{L}}(x)+\sqrt{d_{n}}(y-i\omega)u_{n}^{\mathcal{R}}(y)\mathbf{e}_n \mathbf{u}^{\mathcal{L}}(x), \label{u_k_R Relation 4} \\
&\mathbf{u}^{\mathcal{R}}(y)\mathbf{u}^{\mathcal{L}}(x)\mathcal{K}_n=x\mathbf{u}^{\mathcal{R}}(y)\mathbf{u}^{\mathcal{L}}(x)\mathcal{J}_n+\sqrt{d_{n}}(x+i\omega)\mathbf{u}^{\mathcal{R}}(y) u_{n}^{\mathcal{L}}(x)\mathbf{e}_n^{T}. \label{u_k_L Relation 4}
\end{align}
For any two matrices $\mathcal{M}$ and $\mathcal{N}$, $Tr(\mathcal{M}\mathcal{N})=Tr(\mathcal{N}\mathcal{M})$, provided both the products $\mathcal{M}\mathcal{N}$ and $\mathcal{N}\mathcal{M}$ exist. It is evident that $Tr(\mathcal{J}_n\mathbf{u}^{\mathcal{R}}(y)\mathbf{u}^{\mathcal{L}}(x))$ is equal to $Tr(\mathbf{u}^{\mathcal{R}}(y)\mathbf{u}^{\mathcal{L}}(x)\mathcal{J}_n)$, which is futher equal to the matrix product $\mathbf{u}^{\mathcal{L}}(x)\mathcal{J}_n \mathbf{u}^{\mathcal{R}}(y)$. Since the trace is a linear operator, subtracting the trace of \eqref{u_k_R Relation 4} from the trace of \eqref{u_k_L Relation 4}, we get
\begin{align}
\mathbf{u}^{\mathcal{L}}(x)\mathcal{J}_n \mathbf{u}^{\mathcal{R}}(y)=\dfrac{\sqrt{d_{n}}[(y-i\omega)u_{n}^{\mathcal{R}}(y)u_{n-1}^{\mathcal{L}}(x)-(x+i\omega)u_{n-1}^{\mathcal{R}}(y)u_{n}^{\mathcal{L}}(x)]}{x-y}. \label{u_L H_n u_R relation 1}
\end{align}
Using ${y \rightarrow x}$ in \eqref{u_L H_n u_R relation 1}, we obtain
\begin{align*}
\mathbf{u}^{\mathcal{L}}(x)\mathcal{J}_n \mathbf{u}^{\mathcal{R}}(x)=\sqrt{d_{n}}[(x+i\omega)\{u_{n-1}^{\mathcal{R}}(x)\}'u_{n}^{\mathcal{L}}(x)-(x-i\omega)\{u_{n}^{\mathcal{R}}(x)\}'u_{n-1}^{\mathcal{L}}(x)].
\end{align*}
Hence, at the zeros $x_j^{(n)}$, $j=1 \ldots n$,
\begin{align}
\mathbf{u}^{\mathcal{L}}(x_j^{(n)})\mathcal{J}_n \mathbf{u}^{\mathcal{R}}(x_j^{(n)})=-\sqrt{d_{n}}(x_j^{(n)}-i\omega)[u_{n}^{\mathcal{R}}(x_j^{(n)})]'u_{n-1}^{\mathcal{L}}(x_j^{(n)})=\omega_{n,j}^{-1}. \label{u_L H_n u_R relation 2}
\end{align}
Define $\omega_{n,j,k}^{-1}=-\sqrt{d_{n}}(x_k^{(n)}-i\omega)[u_{n}^{\mathcal{R}}(x_k^{(n)})]'u_{n-1}^{\mathcal{L}}(x_j^{(n)}) \neq 0$, $j,k = 1,2, \ldots n$ and note that $\omega_{n,j,j}^{-1}=\omega_{n,j}^{-1}$ for $k=j$. From \eqref{u_L H_n u_R relation 2}, for $j,k= 1 \ldots n$,
\begin{align*}
\left[\mathcal{J}_n^T[\mathbf{u}^{\mathcal{L}}(x_j^{(n)})]^T\right]^T \mathbf{u}^{\mathcal{R}}(x_k^{(n)})=[ \mathbf{u}^{\mathcal{R}}(x_k^{(n)})]^T \mathcal{J}_n^T [\mathbf{u}^{\mathcal{L}}(x_j^{(n)})]^T = \omega_{n,j,k}^{-1}\delta_{j,k}.
\end{align*}
Hence, it can be seen that two finite sequences $\{\mathcal{J}_n^T[\mathbf{u}^{\mathcal{L}}(x_j^{(n)})]^T \}_{j=1}^n$ and $\{\mathbf{u}^{\mathcal{R}}(x_k^{(n)}) \}_{k=1}^n$ are biorthogonal to each other.\par
Further, the Cholesky decomposition of the positive definite matrix $\mathcal{J}_n$ is such that $\mathcal{J}_n=\mathcal{C}_n\mathcal{C}_n^T$ where
\begin{align*}
\mathcal{C}_n=
\begin{pmatrix}
	m_0 & 0 & \ldots & 0 & 0\\
	\ell_1 & m_1 & \ldots & 0 & 0\\
	\vdots & \vdots & \ddots & \vdots & \vdots\\
	0 & 0 & \ldots & m_{n-2} & 0\\
	0 & 0 & \ldots & \ell_{n-1} & m_{n-1}
\end{pmatrix},
\end{align*}
and $m_i=\sqrt{\rho_i-\ell_i^2}$ (with $\ell_0=0$), $\ell_i=\sqrt{d_i}/m_{i-1}$, $i=0,1, \ldots n-1$. It follows from \eqref{u_L H_n u_R relation 2} that
\begin{align}\label{u_L cholesky u_R relation}
\mathbf{u}^{\mathcal{L}}(x_j^{(n)}) \mathcal{C}_n\mathcal{C}_n^T \mathbf{u}^{\mathcal{R}}(x_k^{(n)})=\left[\mathcal{C}_n^T[\mathbf{u}^{\mathcal{L}}(x_j^{(n)})]^T\right]^T [\mathcal{C}_n^T \mathbf{u}^{\mathcal{R}}(x_k^{(n)})]= \omega_{n,j,k}^{-1}\delta_{j,k}.
\end{align}
The two sequences $\{u_i^{\mathcal{L}}(x)\}_{i=0}^{n-1}$ and $\{u_i^{\mathcal{R}}(x)\}_{i=0}^{n-1}$ are now used to define two new sequences of rational functions \cite{Beckermann Derevyagin Zhedanov linear pencil 2010, Zhedanov biorthogonal JAT 1999} $\{\chi_i^{\mathcal{L}}(x)\}_{i=0}^{n-1}$ and $\{\chi_i^{\mathcal{R}}(x)\}_{i=0}^{n-1}$ given by
\begin{align}
&\chi_i^{\mathcal{L}}(x)= m_i u_{i}^{\mathcal{L}}(x)+\ell_{i+1}u_{i+1}^{\mathcal{L}}(x), \quad \chi_{n-1}^{\mathcal{L}}(x)= m_{n-1} u_{n-1}^{\mathcal{L}}(x), \label{chi_k_L} \\
& \chi_i^{\mathcal{R}}(x)= m_i u_{i}^{\mathcal{R}}(x)+\ell_{i+1}u_{i+1}^{\mathcal{R}}(x), \quad \chi_{n-1}^{\mathcal{R}}(x)= m_{n-1} u_{n-1}^{\mathcal{R}}(x). \label{chi_k_R}
\end{align}
Observe that $\chi_i^{\mathcal{L}}(x_j^{(n)})$ and $\chi_i^{\mathcal{R}}(x_k^{(n)})$ are the $i$-th component of the column vectors obtained from the matrix product $\mathcal{C}_n^T[\mathbf{u}^{\mathcal{L}}(x_j^{(n)})]^T$ and $\mathcal{C}_n^T[\mathbf{u}^{\mathcal{R}}(x_k^{(n)})]$, respectively. Thus, the system of equations resulting from \eqref{u_L cholesky u_R relation} can be written as
\begin{align*}
\sum_{i=0}^{n-1}\chi_i^{\mathcal{L}}(x_j^{(n)})\chi_i^{\mathcal{R}}(x_k^{(n)})\omega_{n,j,k}=\delta_{j,k}, \quad j,k = 1,2, \ldots n,
\end{align*}
which shows that the two finite sequences $\{\chi_i^{\mathcal{L}}(x)\}_{k=0}^{n-1}$ and $\{\chi_i^{\mathcal{R}}(x)\}_{k=0}^{n-1}$ are biorthogonal to each other on the set of zeros of $\mathcal{L}_n(x)$.
\end{proof}

\begin{remark}
The biorthogonality relation stated in \eqref{Biorthogonality 1} is between the two finite sequences $\{\chi_i^{\mathcal{L}}(x_j^{(n)})\}_{i=0}^{n-1}$ and $\{\chi_i^{\mathcal{R}}(x_k^{(n)})\}_{i=0}^{n-1}$ when the elements with the same index are evaluated at different zeros of $\mathcal{L}_n(x)$. This is different from the criterion of biorthogonality
\begin{align*}
\sum_{s=0}^{N}\Omega_s\psi_i(\lambda_s)\eta_k(\lambda_s)=\delta_{i,k}, \quad i,k=0,1,\ldots,N,
\end{align*}
described in \cite[eq. (3.6)]{Zhedanov biorthogonal JAT 1999} where the rational functions $\psi_i(\lambda)$ and $\eta_k(\lambda)$ with different indices are evaluated at the same zero $\lambda_s$, depicting the difference in the concept of biorthogonality between these two methods.
\end{remark}

\begin{remark}
Information about biorthogonal polynomials is abundant in the literature. Interested readers may refer to \cite{Hendriksen Njaastad Rocky J 1991, Konhauser JAMA 1965, Lubinsky Sidi JCAM 2022, Srivastava biorthogonal Laguerre 1982} for relevant discussions. Results on biothogonality relations between finite sequences of polynomials have been obtained in \cite{KKB Swami 2018, Silva Ranga LAA 2005}.
\end{remark}

\subsection{Biorthogonality using Darboux transformation and $\mathcal{L}\mathcal{D}\mathcal{U}$ decomposition of $\mathcal{J}_n$:} The Cholesky decomposition $\mathcal{J}_n=\mathcal{C}_n\mathcal{C}_n^T$ considered above is a $\mathcal{L}\mathcal{U}$ factorization, where $\mathcal{L}$ and $\mathcal{U}$ are the lower and upper triangular matrices, respectively. The technique of Darboux transformation (commutation methods) gives a $\mathcal{U}\mathcal{L}$ factorization of $\mathcal{J}_n$, which is used to derive a new biorthogonality relation. The transformation of a monic Jacobi matrix $\mathcal{J}$ defined as
\begin{align*}
\mathcal{J}=\mathcal{L}\mathcal{U} \rightarrow \mathcal{J}_\mathcal{C}= \mathcal{U}\mathcal{L},
\end{align*}
is called the Darboux transformation without parameter \cite{Bueno Marcell LAA 2004} or the Christoffel transformation of $\mathcal{J}$ \cite{Derevyagin Derkach 2011}. The $\mathcal{L}\mathcal{U}$ factorization of a monic Jacobi matrix $\mathcal{J}$ is unique, whereas being dependent on a free parameter, the $\mathcal{U}\mathcal{L}$ factorization is not unique. As a result, the decomposition
\begin{align}\label{UL of J_n}
\mathcal{J}_n=\mathcal{C}_n^T\mathcal{C}_n
\end{align}
is a $\mathcal{U}\mathcal{L}$ factorization of $\mathcal{J}_n$ with an additional assumption $\ell_n=0$ (see \cite[Lemma 3.1]{Bueno Marcell LAA 2004}), where $\ell_{i+1}=\sqrt{\rho_i-m_i^2}$ (with $m_0=s_0$ as a free parameter) and $m_{i+1}=\sqrt{d_i}/\sqrt{\rho_i-m_i^2}$, $i=0,1, \ldots n-1$.



Another variant of Cholesky decomposition for $\mathcal{J}_n$, known as square-root free Cholesky decomposition \cite{Golub Van Loan 1996}, has the form $\mathcal{J}_n=\mathcal{S}_n\mathcal{D}_n\mathcal{S}_n^T$, where $\mathcal{D}_n$ is a diagonal matrix with diagonal elements $e_i$, $i=0,1,\ldots,n-1$ and $\mathcal{S}_n$ is a unit lower triangular matrix given by
\begin{align*}
\mathcal{S}_n=
\begin{pmatrix}
	1 & 0 & \ldots & 0 & 0\\
	\ell_1 & 1 & \ldots & 0 & 0\\
	\vdots & \vdots & \ddots & \vdots & \vdots\\
	0 & 0 & \ldots & 1 & 0\\
	0 & 0 & \ldots & \ell_{n-1} & 1
\end{pmatrix},
\end{align*}
with $e_0=\rho_0$, $\ell_1=\sqrt{d_1}/e_0$, $e_i=\rho_i-\dfrac{d_{i-1}}{e_{i-1}}$ and $\ell_{i+1}=\sqrt{d_{i+1}}/e_i$, $i=1,\ldots,n-1$ \cite{Meurant 1992}. With $\mathcal{S}^\mathcal{C}_n=\mathcal{S}_n\mathcal{D}^{1/2}_n$, we get
\begin{align}\label{LDU of J_n}
\mathcal{J}_n=\mathcal{S}_n\mathcal{D}_n\mathcal{S}_n^T=\mathcal{S}^\mathcal{C}_n[\mathcal{S}^\mathcal{C}_n]^T,
\end{align}
 where
\begin{align*}
\mathcal{S}^\mathcal{C}_n=
\begin{pmatrix}
	\sqrt{e_0} & 0 & \ldots & 0 & 0\\
	\sqrt{d_1/e_0} & \sqrt{e_1} & \ldots & 0 & 0\\
	\vdots & \vdots & \ddots & \vdots & \vdots\\
	0 & 0 & \ldots & \sqrt{e_{n-2}} & 0\\
	0 & 0 & \ldots & \sqrt{d_{n-1}/e_{n-2}} & \sqrt{e_{n-1}}
\end{pmatrix},
\end{align*}

\begin{theorem}
Let $x_j^{(n)}$, $j=1,2, \ldots n$ be the zeros of $\mathcal{L}_n(x)$
and $\mathcal{Y}_i^{\mathcal{L}}(x)$, $\mathcal{Y}_i^{\mathcal{R}}(x)$, $\mathcal{Z}_i^{\mathcal{L}}(x)$ and $\mathcal{Z}_i^{\mathcal{R}}(x)$ be defined by \eqref{Y_k_L}, \eqref{Y_k_R}, \eqref{Z_k_L} and \eqref{Z_k_L}, respectively. Then, the following biorthogonality relations hold
\begin{align}
&\sum_{i=0}^{n-1}\mathcal{Y}_i^{\mathcal{L}}(x_j^{(n)})\mathcal{Y}_i^{\mathcal{R}}(x_k^{(n)})\omega_{n,j,k}=\delta_{j,k}, \quad j,k = 1,2, \ldots n \quad \mbox{and} \label{Biorthogonality 2} \\
&\sum_{i=0}^{n-1}\mathcal{Z}_i^{\mathcal{L}}(x_j^{(n)})\mathcal{Z}_i^{\mathcal{R}}(x_k^{(n)})\omega_{n,j,k}=\delta_{j,k}, \quad j,k = 1,2, \ldots n, \label{Biorthogonality 3}
\end{align}
where the weight function $\omega_{n,j,k}$ is given by \eqref{w_n_j_k weight}.
\end{theorem}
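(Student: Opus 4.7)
The plan is to mirror the strategy already used in the Cholesky proof of \Cref{Biorthogonality main theorem} and then just swap in the two new factorizations \eqref{UL of J_n} and \eqref{LDU of J_n}. The crucial ingredient, already secured in the proof of \Cref{Biorthogonality main theorem}, is the bilinear identity
\begin{align*}
\mathbf{u}^{\mathcal{L}}(x_j^{(n)})\,\mathcal{J}_n\,\mathbf{u}^{\mathcal{R}}(x_k^{(n)})=\omega_{n,j,k}^{-1}\delta_{j,k}, \qquad j,k=1,\ldots,n,
\end{align*}
which holds regardless of how $\mathcal{J}_n$ is subsequently decomposed. Since the weight $\omega_{n,j,k}$ is defined by \eqref{w_n_j_k weight} exactly as before, no new residue computation is required.

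First, for \eqref{Biorthogonality 2}, I would substitute the $\mathcal{U}\mathcal{L}$ factorisation $\mathcal{J}_n=\mathcal{C}_n^T\mathcal{C}_n$ coming from the Darboux (Christoffel) transformation, so that
\begin{align*}
\left[\mathcal{C}_n[\mathbf{u}^{\mathcal{L}}(x_j^{(n)})]^T\right]^T \cdot \left[\mathcal{C}_n\,\mathbf{u}^{\mathcal{R}}(x_k^{(n)})\right]=\omega_{n,j,k}^{-1}\delta_{j,k}.
\end{align*}
Reading off the components of the two column vectors row by row, and using the bidiagonal structure of $\mathcal{C}_n$ (with the parameters $m_i,\ell_i$ and the terminal condition $\ell_n=0$ as specified just before the theorem), one is led to the definitions
\begin{align*}
\mathcal{Y}_i^{\mathcal{L}}(x)=m_i u_i^{\mathcal{L}}(x)+\ell_i u_{i-1}^{\mathcal{L}}(x), \qquad \mathcal{Y}_i^{\mathcal{R}}(x)=m_i u_i^{\mathcal{R}}(x)+\ell_i u_{i-1}^{\mathcal{R}}(x),
\end{align*}
with the obvious convention at $i=0$. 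Expanding the inner product component-wise then yields \eqref{Biorthogonality 2}.

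For \eqref{Biorthogonality 3} I would repeat the argument, but now substitute the $\mathcal{L}\mathcal{D}\mathcal{U}$ (square-root-free Cholesky) factorisation $\mathcal{J}_n=\mathcal{S}_n^{\mathcal{C}}[\mathcal{S}_n^{\mathcal{C}}]^T$ from \eqref{LDU of J_n}. The identity becomes
\begin{align*}
\left[[\mathcal{S}_n^{\mathcal{C}}]^T[\mathbf{u}^{\mathcal{L}}(x_j^{(n)})]^T\right]^T \cdot \left[[\mathcal{S}_n^{\mathcal{C}}]^T\,\mathbf{u}^{\mathcal{R}}(x_k^{(n)})\right]=\omega_{n,j,k}^{-1}\delta_{j,k},
\end{align*}
and since $[\mathcal{S}_n^{\mathcal{C}}]^T$ is bidiagonal with diagonal entries $\sqrt{e_i}$ and superdiagonal entries $\sqrt{d_{i+1}/e_i}$, the natural componentwise formulas give
\begin{align*}
\mathcal{Z}_i^{\mathcal{L}}(x)=\sqrt{e_i}\,u_i^{\mathcal{L}}(x)+\sqrt{d_{i+1}/e_i}\,u_{i+1}^{\mathcal{L}}(x), \qquad \mathcal{Z}_i^{\mathcal{R}}(x)=\sqrt{e_i}\,u_i^{\mathcal{R}}(x)+\sqrt{d_{i+1}/e_i}\,u_{i+1}^{\mathcal{R}}(x),
\end{align*}
(with the appropriate truncation at $i=n-1$), and expanding the dot product gives \eqref{Biorthogonality 3}.

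The only real obstacle, in my view, is bookkeeping rather than ideas: one must check that both factorisations genuinely apply here, meaning $\mathcal{J}_n$ is positive definite so that the $\mathcal{L}\mathcal{D}\mathcal{U}$ pivots $e_i$ are strictly positive, and that the free parameter in the Darboux step can be chosen to enforce the termination condition $\ell_n=0$ cited from \cite[Lemma 3.1]{Bueno Marcell LAA 2004}. Once these are in place, the proofs of both identities are essentially a copy of the Cholesky argument, with $\mathcal{C}_n$ replaced by $\mathcal{C}_n^T$ (for \eqref{Biorthogonality 2}) and by $[\mathcal{S}_n^{\mathcal{C}}]^T$ (for \eqref{Biorthogonality 3}), so I would present the two cases together and leave the monomial-by-monomial verification of the $\mathcal{Y}$ and $\mathcal{Z}$ formulas to the reader.
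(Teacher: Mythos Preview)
Your proposal is correct and follows essentially the same route as the paper: invoke the bilinear identity $\mathbf{u}^{\mathcal{L}}(x_j^{(n)})\mathcal{J}_n\mathbf{u}^{\mathcal{R}}(x_k^{(n)})=\omega_{n,j,k}^{-1}\delta_{j,k}$ from the Cholesky proof, substitute the $\mathcal{U}\mathcal{L}$ and $\mathcal{L}\mathcal{D}\mathcal{U}$ factorisations \eqref{UL of J_n} and \eqref{LDU of J_n}, and read off components. Your formula for $\mathcal{Y}_i$ is shifted by one index relative to the paper's \eqref{Y_k_L}--\eqref{Y_k_R} (you write $m_i u_i+\ell_i u_{i-1}$ where the paper writes $\ell_{i+1}u_i+m_{i+1}u_{i+1}$), but both are just relabellings of the entries of $\mathcal{C}_n\mathbf{u}$, so the content is identical.
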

\begin{proof}
Proceeding as per the proof of \Cref{Biorthogonality main theorem} and using the decompositions of $\mathcal{J}_n$ given by \eqref{UL of J_n} and \eqref{LDU of J_n}, it follows from \eqref{u_L H_n u_R relation 2} that
\begin{align}
&\mathbf{u}^{\mathcal{L}}(x_j^{(n)}) \mathcal{C}_n^T\mathcal{C}_n \mathbf{u}^{\mathcal{R}}(x_k^{(n)})=\left[\mathcal{C}_n[\mathbf{u}^{\mathcal{L}}(x_j^{(n)})]^T\right]^T [\mathcal{C}_n \mathbf{u}^{\mathcal{R}}(x_k^{(n)})]= \omega_{n,j,k}^{-1}\delta_{j,k}. \label{u_L Darboux u_R relation} \\
&\mathbf{u}^{\mathcal{L}}(x_j^{(n)}) \mathcal{S}^\mathcal{C}_n[\mathcal{S}^\mathcal{C}_n]^T \mathbf{u}^{\mathcal{R}}(x_k^{(n)})=\left[[\mathcal{S}^\mathcal{C}_n]^T[\mathbf{u}^{\mathcal{L}}(x_j^{(n)})]^T\right]^T \left[[\mathcal{S}^\mathcal{C}_n]^T \mathbf{u}^{\mathcal{R}}(x_k^{(n)})\right]= \omega_{n,j,k}^{-1}\delta_{j,k}. \label{u_L LDL u_R relation}
\end{align}
Defining new sequences of rational functions $\{\mathcal{Y}_i^{\mathcal{L}}(x)\}_{i=0}^{n-1}$, $\{\mathcal{Y}_i^{\mathcal{R}}(x)\}_{i=0}^{n-1}$, $\{\mathcal{Z}_i^{\mathcal{L}}(x)\}_{i=0}^{n-1}$ and $\{\mathcal{Z}_i^{\mathcal{R}}(x)\}_{i=0}^{n-1}$ with the help of $\{u_i^{\mathcal{L}}(x)\}_{i=0}^{n-1}$ and $\{u_i^{\mathcal{R}}(x)\}_{i=0}^{n-1}$, we obtain
\begin{align}
&\mathcal{Y}_i^{\mathcal{L}}(x)= \ell_{i+1}u_{i}^{\mathcal{L}}(x)+ m_{i+1} u_{i+1}^{\mathcal{L}}(x), \quad \mathcal{Y}_{0}^{\mathcal{L}}(x)= m_{0} u_{0}^{\mathcal{L}}(x), \label{Y_k_L} \\
& \mathcal{Y}_i^{\mathcal{R}}(x)= \ell_{i+1}u_{i}^{\mathcal{R}}(x)+ m_{i+1} u_{i+1}^{\mathcal{R}}(x), \quad \mathcal{Y}_{0}^{\mathcal{R}}(x)= m_{0} u_{0}^{\mathcal{R}}(x). \label{Y_k_R} \\
&\mathcal{Z}_i^{\mathcal{L}}(x)= \sqrt{e_i} u_{i}^{\mathcal{L}}(x)+\sqrt{d_{i+1}/e_{i}}u_{i+1}^{\mathcal{L}}(x), \quad \mathcal{Z}_{0}^{\mathcal{L}}(x)= \sqrt{e_{n-1}} u_{n-1}^{\mathcal{L}}(x), \label{Z_k_L} \\
& \mathcal{Z}_i^{\mathcal{R}}(x)= \sqrt{e_i} u_{i}^{\mathcal{R}}(x)+\sqrt{d_{i+1}/e_{i}}u_{i+1}^{\mathcal{R}}(x), \quad \mathcal{Z}_{0}^{\mathcal{R}}(x)= \sqrt{e_{n-1}} u_{n-1}^{\mathcal{R}}(x). \label{Z_k_R}
\end{align}
Note that $\mathcal{Y}_i^{\mathcal{L}}(x_j^{(n)})$, $\mathcal{Y}_i^{\mathcal{R}}(x_k^{(n)})$, $\mathcal{Z}_i^{\mathcal{L}}(x_j^{(n)})$ and $\mathcal{Z}_i^{\mathcal{R}}(x_k^{(n)})$ are the $i$-th component of the column vectors obtained from the matrix product $\mathcal{C}_n[\mathbf{u}^{\mathcal{L}}(x_j^{(n)})]^T$, $\mathcal{C}_n \mathbf{u}^{\mathcal{R}}(x_k^{(n)})$, $[\mathcal{S}^\mathcal{C}_n]^T[\mathbf{u}^{\mathcal{L}}(x_j^{(n)})]^T$ and $[\mathcal{S}^\mathcal{C}_n]^T \mathbf{u}^{\mathcal{R}}(x_k^{(n)})$ respectively. Thus, the system of equations resulting from \eqref{u_L Darboux u_R relation} and \eqref{u_L LDL u_R relation} is given by \eqref{Biorthogonality 2} and \eqref{Biorthogonality 3}. This completes the proof.
\end{proof}

\begin{remark}
The method for proving biorthogonality between rational functions used here differs from that used in \cite[Theorem 1]{Zhedanov biorthogonal JAT 1999}. The Christoffel-Darboux type formula \eqref{u_L H_n u_R relation 1} is a fundamental component of our approach.
\end{remark}

\section{Interlacing Properties}\label{Interlacing Properties}
The following results on the zeros of $R_{II}$ polynomials satisfying \eqref{special R2 Linear pp} will be helpful for further discussion.

\begin{theorem}\cite[Theorem 6.1]{swami vinay R2 2022}\label{Ranga leading coefficient}
The polynomial $\mathcal{P}_n(x)$ obtained from \eqref{GCRR recurrence} is of degree $n$ with positive leading coefficient. Further, if we denote by $\mathfrak{k}_n$ the leading coefficient of $\mathcal{P}_n(x)$, then $\mathfrak{k}_0 = 1$, $\mathfrak{k}_1 = 1$ and
\begin{align*}
	0 < (1-\ell_{n-1})=\dfrac{\mathfrak{k}_n}{\mathfrak{k}_{n-1}} < 1, \quad n \geq 2,
\end{align*}
where $\{\ell_{n}\}_{n \geq 0}$ is the minimal parameter sequence of the positive chain sequence $\{d_{n}\}_{n \geq 1}$.
\end{theorem}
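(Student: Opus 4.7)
The plan is a short induction on $n$ that extracts, from the monic recurrence \eqref{GCRR recurrence}, a scalar recurrence for the ratio of consecutive leading coefficients, and then matches that recurrence against the defining factorization of the minimal parameter sequence of $\{d_n\}_{n\geq 1}$.

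First I would handle the degree and the coefficient recurrence together. The base cases $\mathcal{P}_0(x)=1$ and $\mathcal{P}_1(x)=x-c_1$ give $\deg\mathcal{P}_0=0$, $\deg\mathcal{P}_1=1$ and $\mathfrak{k}_0=\mathfrak{k}_1=1$. Assuming inductively that $\mathcal{P}_n$ has degree $n$ with leading coefficient $\mathfrak{k}_n$ and $\mathcal{P}_{n-1}$ has degree $n-1$ with leading coefficient $\mathfrak{k}_{n-1}$, comparing coefficients of $x^{n+1}$ on both sides of \eqref{GCRR recurrence} (the $x\mathcal{P}_n$ term contributes $\mathfrak{k}_n$ and the $x^2\mathcal{P}_{n-1}$ term contributes $\mathfrak{k}_{n-1}$) shows that $\mathcal{P}_{n+1}$ has degree at most $n+1$ with
$$\mathfrak{k}_{n+1}=\mathfrak{k}_n-d_n\mathfrak{k}_{n-1},\qquad n\geq 1.$$
The fact that $\mathfrak{k}_{n+1}$ is nonzero (and positive) will be handed off to the next step, which produces the exact value of the ratio.

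Next, set $s_n:=\mathfrak{k}_n/\mathfrak{k}_{n-1}$ for $n\geq 1$ and divide the coefficient recurrence by $\mathfrak{k}_n$ to obtain
$$s_{n+1}=1-\frac{d_n}{s_n},\qquad\text{i.e.,}\qquad d_n=s_n(1-s_{n+1}).$$
By definition, the minimal parameter sequence $\{\ell_n\}_{n\geq 0}$ of the positive chain sequence $\{d_n\}_{n\geq 1}$ is characterized by $\ell_0=0$ together with the factorization $d_n=(1-\ell_{n-1})\ell_n$ for $n\geq 1$. For the base case, $\mathfrak{k}_2=\mathfrak{k}_1-d_1\mathfrak{k}_0=1-d_1$, and since $d_1=(1-\ell_0)\ell_1=\ell_1$, we get $s_2=1-\ell_1$. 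Assuming inductively $s_n=1-\ell_{n-1}$, the displayed identity gives
$$s_{n+1}=1-\frac{(1-\ell_{n-1})\ell_n}{1-\ell_{n-1}}=1-\ell_n,$$
which closes the induction and yields $\mathfrak{k}_n/\mathfrak{k}_{n-1}=1-\ell_{n-1}$ for $n\geq 2$.

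Finally, the strict double inequality $0<1-\ell_{n-1}<1$ for $n\geq 2$ follows from the standard properties of a minimal parameter sequence of a positive chain sequence: $\ell_k\in[0,1)$ for every $k\geq 0$, and $\ell_k>0$ as soon as $k\geq 1$ because each $d_k$ is strictly positive. This simultaneously confirms that $\mathfrak{k}_{n+1}>0$ and that $\deg\mathcal{P}_{n+1}=n+1$, closing the loop left open in the first paragraph. The only non-mechanical step is recognizing the recurrence $d_n=s_n(1-s_{n+1})$ as exactly the defining factorization of a parameter sequence with initialization matching $\ell_0=0$; once that correspondence is spotted, everything else is a routine two-line induction together with a direct appeal to Chihara's chain sequence theory.
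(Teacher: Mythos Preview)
Your argument is correct. The paper does not supply its own proof of this statement; it is quoted verbatim from \cite{swami vinay R2 2022}, so there is nothing in the present manuscript to compare against. What you wrote is in fact the standard route: extract the scalar recurrence $\mathfrak{k}_{n+1}=\mathfrak{k}_n-d_n\mathfrak{k}_{n-1}$ from the top-degree term of \eqref{GCRR recurrence} (here $\rho_n=1$), rewrite it as $d_n=s_n(1-s_{n+1})$ with $s_n=\mathfrak{k}_n/\mathfrak{k}_{n-1}$, and identify $1-s_n$ with the minimal parameter sequence via the initialization $s_1=1$, i.e., $\ell_0=0$. One small presentational point: the division by $\mathfrak{k}_n$ and the positivity of $\mathfrak{k}_n$ should be folded into a single simultaneous induction (prove $\mathfrak{k}_n>0$ and $s_n=1-\ell_{n-1}$ together), rather than deferring positivity to ``the next step''; as written the logic is circular in appearance even though it is not in substance.
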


\begin{theorem}\cite[Theorem 6.2]{swami vinay R2 2022}\label{Ranga zeros interlacing R2}
The zeros $x^{(n)}_j$, $j=1,2, \ldots n$ of $\mathcal{P}_n(x)$, $n \geq 0$ derived from \eqref{GCRR recurrence}, are real and simple. Assuming the ordering $x^{(n)}_{j-1} < x^{(n)}_j$ for the zeros, we have the interlacing
\begin{align*}
x^{(n+1)}_1 < x^{(n)}_{1} < x^{(n+1)}_2 < \ldots < x^{(n+1)}_n < x^{(n)}_{n} < x^{(n+1)}_{n+1}, \quad  n \geq 1.
\end{align*}
\end{theorem}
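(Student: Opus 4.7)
The plan is to argue by strong induction on $n$, directly from the recurrence \eqref{GCRR recurrence}, in the spirit of the classical Sturm sign-chasing proof for orthogonal polynomials on the real line. The key observation is that the coefficient multiplying $\mathcal{P}_{n-1}(x)$ in \eqref{GCRR recurrence}, namely $-d_n(x^2+\omega^2)$ with $d_n=\tfrac{n(2\zeta+n-1)}{4(\zeta+n)(\zeta+n-1)}>0$ and $x^2+\omega^2>0$ on the real line (provided $\omega\neq 0$), behaves like a strictly negative factor at every real evaluation point. This is the only ingredient required to mimic the classical argument, despite the $R_{II}$ nature of the recurrence and the presence of the quadratic factor in place of a linear one.

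The base case $n=1$ is immediate since $\mathcal{P}_1(x)=x-c_0$ has a single real zero. For the inductive step, I would assume the statement through level $n$, so in particular that the zeros of $\mathcal{P}_{n-1}$ strictly interlace those of $\mathcal{P}_n$. Evaluating \eqref{GCRR recurrence} at $x=x_j^{(n)}$ gives
\begin{align*}
\mathcal{P}_{n+1}\bigl(x_j^{(n)}\bigr)=-d_n\Bigl((x_j^{(n)})^2+\omega^2\Bigr)\mathcal{P}_{n-1}\bigl(x_j^{(n)}\bigr),
\end{align*}
so $\mathcal{P}_{n+1}(x_j^{(n)})$ and $\mathcal{P}_{n-1}(x_j^{(n)})$ have opposite signs. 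By the inductive hypothesis $\mathcal{P}_{n-1}$ alternates sign across consecutive zeros of $\mathcal{P}_n$; therefore $\mathcal{P}_{n+1}$ does likewise, producing at least one zero of $\mathcal{P}_{n+1}$ in each of the $n-1$ open intervals $\bigl(x_j^{(n)},x_{j+1}^{(n)}\bigr)$.

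To recover the two extreme zeros I would invoke \Cref{Ranga leading coefficient}: the leading coefficient $\mathfrak{k}_{n+1}$ is strictly positive, so $\mathcal{P}_{n+1}(x)\to +\infty$ as $x\to +\infty$ and $\mathcal{P}_{n+1}(x)\to (-1)^{n+1}\infty$ as $x\to -\infty$. Since every zero of $\mathcal{P}_{n-1}$ lies strictly below $x_n^{(n)}$ and $\mathfrak{k}_{n-1}>0$, we have $\mathcal{P}_{n-1}(x_n^{(n)})>0$, whence $\mathcal{P}_{n+1}(x_n^{(n)})<0$, forcing a zero of $\mathcal{P}_{n+1}$ in $(x_n^{(n)},+\infty)$; the mirror argument at $x_1^{(n)}$ yields a zero in $(-\infty,x_1^{(n)})$. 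Counting $n+1$ distinct real zeros of a degree-$(n+1)$ polynomial forces simplicity and immediately delivers the claimed interlacing pattern. The main obstacle I anticipate is purely a bookkeeping one: the induction hypothesis must carry both the real-simplicity and the strict interlacing together at each level, since the sign-alternation argument above relies on $\mathcal{P}_{n-1}(x_j^{(n)})\neq 0$ for every $j$, and the hypothesis $\omega\neq 0$ must be used essentially to guarantee $x^2+\omega^2>0$ at every real evaluation point — otherwise a spurious double zero could infiltrate the construction and derail the induction.
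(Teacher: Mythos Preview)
The paper under review does not itself prove this theorem; it is quoted from the authors' companion preprint \cite{swami vinay R2 2022} and stated here without argument, so there is no in-paper proof to compare against.

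Assessed on its own merits, your proposal is correct and is precisely the natural adaptation of the classical Sturm sign-chasing proof to the $R_{II}$ setting. The two ingredients you isolate are exactly the right ones: (i) evaluating \eqref{GCRR recurrence} at a zero $x_j^{(n)}$ of $\mathcal{P}_n$ gives $\mathcal{P}_{n+1}(x_j^{(n)})=-d_n\bigl((x_j^{(n)})^2+\omega^2\bigr)\mathcal{P}_{n-1}(x_j^{(n)})$, and since $d_n>0$ and $x^2+\omega^2>0$ on $\mathbb{R}$ (here $\omega\neq 0$ is essential), $\mathcal{P}_{n+1}$ and $\mathcal{P}_{n-1}$ have strictly opposite signs at every zero of $\mathcal{P}_n$; (ii) the positivity of the leading coefficients $\mathfrak{k}_m$ from \Cref{Ranga leading coefficient} pins down the asymptotic signs and supplies the two outermost zeros. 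Your sign computation at the endpoints is correct (at $x_1^{(n)}$ one has $\operatorname{sgn}\mathcal{P}_{n-1}(x_1^{(n)})=(-1)^{n-1}$, hence $\operatorname{sgn}\mathcal{P}_{n+1}(x_1^{(n)})=(-1)^n$, opposite to the sign of $\mathcal{P}_{n+1}$ at $-\infty$), and the bookkeeping caveat you raise---that strict interlacing and simplicity must be carried together so that $\mathcal{P}_{n-1}(x_j^{(n)})\neq 0$---is exactly the point that makes the induction close. The base step $n=1\to 2$ is covered by the same mechanism with $\mathcal{P}_0\equiv 1$, so no separate treatment is needed. This is almost certainly the argument, or a trivial variant of it, that appears in the cited preprint.
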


\begin{remark}
For $w=1$, \Cref{Ranga leading coefficient} and \Cref{Ranga zeros interlacing R2} have been established in \cite{Esmail Ranga 2018}.
\end{remark}

A direct consequence of \Cref{Lemma 1 Biorthogonality section}, \Cref{GEVP P_n} and \Cref{Ranga zeros interlacing R2} is the following:
\begin{theorem}
The zeros of polynomials $\mathcal{L}_n(x)$, $n \geq 1$, are real and simple. Moreover, they are the eigenvalues of the generalised eigenvalue problem
\begin{align*}
	\mathcal{K}_n\mathbf{u}^{\mathcal{R}}(x)=x\mathcal{J}_n\mathbf{u}^{\mathcal{R}}(x),
\end{align*}
where $\mathcal{K}_n$ and $\mathcal{J}_n$ are given in \Cref{Lemma 1 Biorthogonality section}.
\end{theorem}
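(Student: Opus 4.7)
The plan is to combine the recurrence relation \eqref{R2 recurrence from linear 1} with the interlacing theorem \Cref{Ranga zeros interlacing R2} and the matrix identity from \Cref{Lemma 1 Biorthogonality section}. First, under the hypotheses of \Cref{Theorem 1 with aplha_n condition}, the perturbed sequence $\{\mathcal{L}_n(x)\}$ is generated by the special $R_{II}$ recurrence \eqref{R2 recurrence from linear 1}, whose structure is identical to \eqref{special R2 Linear pp} after reading off the perturbed center coefficient $-f_n^{-1}s_n/\rho_n$ in place of $c_n$. Hence \Cref{Ranga zeros interlacing R2} applies verbatim to $\{\mathcal{L}_n(x)\}$ and yields that every zero $x_j^{(n)}$ of $\mathcal{L}_n(x)$ is real and simple, with interlacing across consecutive degrees.

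Next, for the generalized eigenvalue characterization, I would invoke the matrix identity \eqref{u_k_R Relation 3}
\begin{equation*}
\mathcal{K}_n\mathbf{u}^{\mathcal{R}}(x) = x\mathcal{J}_n\mathbf{u}^{\mathcal{R}}(x) + \sqrt{d_n}(x-i\omega)u_n^{\mathcal{R}}(x)\mathbf{e}_n,
\end{equation*}
and evaluate it at $x = x_j^{(n)}$. Since $u_n^{\mathcal{R}}(x)=\tfrac{(-1)^n}{(x-i\omega)^n\prod_{k=1}^n\sqrt{d_k}}\mathcal{L}_n(x)$, the value $u_n^{\mathcal{R}}(x_j^{(n)})$ vanishes, killing the inhomogeneous term and leaving $\mathcal{K}_n\mathbf{u}^{\mathcal{R}}(x_j^{(n)}) = x_j^{(n)}\mathcal{J}_n\mathbf{u}^{\mathcal{R}}(x_j^{(n)})$. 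The reality of $x_j^{(n)}$ ensures $x_j^{(n)}\neq i\omega$, so the denominators in the definition of $u_k^{\mathcal{R}}$ stay finite and $\mathbf{u}^{\mathcal{R}}(x_j^{(n)})$ is a well-defined nonzero vector (its zeroth component equals $\mathcal{L}_0(x_j^{(n)})=1$). Thus each $x_j^{(n)}$ is a genuine generalized eigenvalue.

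Finally, a degree count closes the loop: $\det(x\mathcal{J}_n-\mathcal{K}_n)$ is a polynomial of degree exactly $n$ in $x$, because $\mathcal{J}_n$ is tridiagonal with the positive-chain-sequence structure of \Cref{Ranga leading coefficient} and is therefore nonsingular. The generalized eigenproblem hence admits precisely $n$ eigenvalues (with multiplicity), and the $n$ distinct real zeros of $\mathcal{L}_n$ already exhibited saturate this count, so they are exactly the eigenvalues of $\mathcal{K}_n\mathbf{u}^{\mathcal{R}}=x\mathcal{J}_n\mathbf{u}^{\mathcal{R}}$.

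The main obstacle, if any, is bookkeeping rather than substance: one must verify that the coefficients appearing in \eqref{R2 recurrence from linear 1} still satisfy the positivity/chain-sequence assumptions required by \Cref{Ranga zeros interlacing R2} (so that the cited interlacing result is applicable to $\mathcal{L}_n$ and not just to $\mathcal{P}_n$), and one must confirm nonsingularity of $\mathcal{J}_n$ to guarantee the characteristic polynomial has full degree $n$. Both points are essentially free from the hypotheses of \Cref{Theorem 1 with aplha_n condition}; the remainder is a direct assembly of the three cited ingredients, which is why the statement is advertised as a direct consequence.
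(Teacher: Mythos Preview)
Your proposal is correct and matches the paper's intended argument: the paper records this theorem without a separate proof, stating only that it is ``a direct consequence of \Cref{Lemma 1 Biorthogonality section}, \Cref{GEVP P_n} and \Cref{Ranga zeros interlacing R2}'', and your write-up spells out precisely that assembly. Your additional degree-count step (via nonsingularity of $\mathcal{J}_n$) to pin down that the zeros exhaust the spectrum is a welcome detail that the paper leaves implicit.
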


It is interesting to investigate the location of zeros of $\mathcal{L}_n(x)$ with respect to those of $\mathcal{P}_n(x)$ and $\mathcal{P}_{n-1}(x)$. Such explorations are already available in the literature in the context of quasi-orthogonality of classical orthogonal polynomials and are often addressed as triple interlacing of zeros \cite{Brezinski Driver Redivo-Zaglia ANM 2004, Joulak ANM 2005}. As a result, we arrive at the following conclusion.
\begin{theorem}\label{Interlacing theorem P_n P_n-1 Q_n}
The zeros of $\mathcal{P}_n(x)$, $\mathcal{P}_{n-1}(x)$ and $\mathcal{L}_n(x)$ denoted as $x_{i}^{(n)}$, $x_{i}^{(n-1)}$ and $y_i$ respectively, satisfy triple interlacing property as follows: \\
{\rm (1)} For $\alpha_{n}>0$, $x_{i}^{(n)} < y_i < x_{i}^{(n-1)}$, $\forall i=1,2, \ldots, n-1$ on $\mathbb{R} \backslash [x_{n}^{(n)}, \infty]$.\\
{\rm (2)} For $\alpha_{n}<0$, $x_{i-1}^{(n-1)} < y_i < x_{i}^{(n)}$, $\forall i=2, \ldots, n$ on $\mathbb{R} \backslash [-\infty,x_{1}^{(n)}]$.
\end{theorem}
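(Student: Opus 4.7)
The plan is to translate the relation $\mathcal{L}_n(x) = \mathcal{P}_n(x) - \alpha_n \mathcal{P}_{n-1}(x)$ into sign information at the nodes $x_i^{(n)}$ and $x_i^{(n-1)}$ and then invoke the intermediate value theorem. Evaluating gives
\[
\mathcal{L}_n(x_i^{(n)}) = -\alpha_n\, \mathcal{P}_{n-1}(x_i^{(n)}), \qquad \mathcal{L}_n(x_i^{(n-1)}) = \mathcal{P}_n(x_i^{(n-1)}),
\]
so the whole question reduces to pinning down the signs of $\mathcal{P}_{n-1}$ at the zeros of $\mathcal{P}_n$ and vice versa. Combining \Cref{Ranga zeros interlacing R2}, which furnishes the strict interlacing $x_1^{(n)} < x_1^{(n-1)} < x_2^{(n)} < \cdots < x_{n-1}^{(n-1)} < x_n^{(n)}$, with the positivity of the leading coefficients from \Cref{Ranga leading coefficient}, and a direct count of negative factors in $\mathfrak{k}_{n-1}\prod_j (x-x_j^{(n-1)})$ and $\mathfrak{k}_n\prod_j (x-x_j^{(n)})$, one obtains
\[
\operatorname{sgn}\mathcal{P}_{n-1}(x_i^{(n)}) = (-1)^{n-i},\ i = 1,\ldots,n, \qquad \operatorname{sgn}\mathcal{P}_n(x_i^{(n-1)}) = (-1)^{n-i},\ i = 1,\ldots,n-1.
\]

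For case (1), with $\alpha_n > 0$, the signs of $\mathcal{L}_n$ at $x_i^{(n)}$ and at $x_i^{(n-1)}$ are $(-1)^{n-i+1}$ and $(-1)^{n-i}$ respectively, hence opposite, so the intermediate value theorem supplies a zero $y_i$ in each $(x_i^{(n)}, x_i^{(n-1)})$ for $i=1,\ldots,n-1$. This gives $n-1$ distinct zeros of $\mathcal{L}_n$ inside $\mathbb{R}\setminus[x_n^{(n)},\infty)$. Since $\mathcal{L}_n$ inherits the positive leading coefficient of $\mathcal{P}_n$, it has degree exactly $n$; the single remaining zero is located by noting that $\mathcal{L}_n(x_n^{(n)}) = -\alpha_n \mathcal{P}_{n-1}(x_n^{(n)}) < 0$ while $\mathcal{L}_n(x)\to +\infty$ as $x\to+\infty$, which confines it to $(x_n^{(n)},\infty)$ and therefore to the precise region excluded in the statement.

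Case (2), with $\alpha_n < 0$, is handled by the mirror argument: at the interlacing nodes one now has $\operatorname{sgn}\mathcal{L}_n(x_{i-1}^{(n-1)}) = (-1)^{n-i+1}$ and $\operatorname{sgn}\mathcal{L}_n(x_i^{(n)}) = (-1)^{n-i}$, so the intermediate value theorem produces $y_i \in (x_{i-1}^{(n-1)}, x_i^{(n)})$ for each $i = 2,\ldots,n$. The sole missing zero is then trapped in $(-\infty,x_1^{(n)})$ by comparing $\operatorname{sgn}\mathcal{L}_n(x_1^{(n)}) = (-1)^{n-1}$ with the limit $\operatorname{sgn}\mathcal{L}_n(x)\to(-1)^n$ as $x\to-\infty$, matching the excluded interval in the statement.

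The only real obstacle is keeping the sign bookkeeping straight across the two cases; once the sign table above is in hand, the intermediate value theorem accounts for $n-1$ real zeros in the prescribed intervals, and the degree count forces the last zero to fall in the excluded half-line. This simultaneously establishes that all zeros of $\mathcal{L}_n$ are real and simple and delivers the claimed triple interlacing.
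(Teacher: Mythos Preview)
Your argument is correct and follows essentially the same route as the paper: evaluate $\mathcal{L}_n$ at the zeros $x_i^{(n)}$ and $x_i^{(n-1)}$, read off the signs from the interlacing in \Cref{Ranga zeros interlacing R2} together with the positivity of the leading coefficients in \Cref{Ranga leading coefficient}, and apply the intermediate value theorem. The only cosmetic differences are that the paper splits into $n$ odd and $n$ even (writing the signs as $(-1)^i$ under the odd assumption) whereas you treat both parities uniformly via $(-1)^{n-i}$, and that you additionally pin down the location of the remaining $n$-th zero in the excluded half-line, which the paper's proof leaves implicit.
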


\begin{proof}
We prove (1) given above and (2) follows by similar argument. Assume that $n$ is an odd number, as the situation for even $n$ can be handled similarly. Using \Cref{Ranga leading coefficient} and the interlacting property of zeros of $\mathcal{P}_n(x)$ and $\mathcal{P}_{n-1}(x)$, we conclude that
\begin{align}
&	(-1)^{i}\mathcal{P}_n(x_{i}^{(n-1)})< 0, \quad {\rm and} \quad (-1)^{i}\mathcal{P}_{n-1}(x_{i}^{(n)})< 0. \label{Interlacing P_n-1 relation}
\end{align}
Then, using \eqref{Interlacing P_n-1 relation} in \eqref{Linear combination Q_n}, for $\alpha_{n} > 0$, we obtain
\begin{align*}
&(-1)^{i}\mathcal{L}_n(x_{i}^{(n)})>0, \quad (-1)^{i+1}\mathcal{L}_n(x_{i}^{(n-1)})>0,
\end{align*}
which implies
\begin{align*}
\mathcal{L}_n(x_{i}^{(n)})\mathcal{L}_n(x_{i}^{(n-1)})<0, \quad i=1,2,\ldots,n-1.
\end{align*}
By intermediate value property, we have
\begin{align*}
x_{1}^{(n)} < y_1 < x_{1}^{(n-1)} < x_{2}^{(n)} < y_2 < x_{2}^{(n-1)}< \ldots < x_{n-1}^{(n)} < y_{n-1} < x_{n-1}^{(n-1)}. \hspace{3.3cm} \qedhere
\end{align*}
\end{proof}

It is easy to see that different self perturbed sequences of $R_{II}$ polynomials can be designed by changing the sequence  $\{\alpha_n\}_{n=1}^\infty$. Any such sequence under the assumptions of \Cref{Theorem 1 with aplha_n condition} should satisfy the recurrence relation \eqref{R2 recurrence from linear 1} and thus has real and simple zeros by virtue of \Cref{Ranga zeros interlacing R2}. It is worthy to observe the interlacing of zeros between two such sequences of polynomials. Results on the interlacing of zeros of some orthogonal polynomials from different sequences can be found in \cite{Driver Jordaan Mbuyi ANM 2009, Jordaan Tookos 2009}. Wronskians are useful objects in discussions related to the linear combination of polynomials. Recall that Wronskian of the two polynomials $\mathcal{U}_n(x)$ and $\mathcal{V}_n(x)$ given by
\begin{align*}
\mathcal{W}_n(x)=\begin{vmatrix}
	\mathcal{U}_n(x) & \mathcal{V}_{n}(x)\\
	\mathcal{U'}_n(x) & \mathcal{V'}_{n}(x)
\end{vmatrix},
\end{align*}
has the following properties:
\begin{enumerate}
\item $\mathcal{W}_n(x)$ is linear in $\mathcal{U}_n(x)$ and $\mathcal{V}_n(x)$.
\item If $\mathcal{W}_n(x) \neq 0$ in some interval $\mathcal{I}$, then $\mathcal{U}_n(x)$ and $\mathcal{V}_n(x)$ have simple and intelacing zeros in $\mathcal{I}$.
\end{enumerate}

\begin{theorem}\label{Theorem different seq interlace}
Let $\mathcal{L}_n(x)$, $n \geq 0$, be as defined by \eqref{Linear combination Q_n}. Further, let $\mathcal{T}_n(x)=\mathcal{P}_n(x)-\beta_n\mathcal{P}_{n-1}(x)$, $\beta_n \in \mathbb{R}\backslash \{ 0 \}$, $n \geq 0$, be another class of $R_{II}$ polynomials (as defined in \Cref{remark another alpha_n or beta_n}). Then, for $\alpha_{n} \neq \beta_n$, $\mathcal{L}_n(x)$ and $\mathcal{T}_n(x)$ have no common zeros and their zeros interlace.
\end{theorem}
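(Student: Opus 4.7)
I would prove the two claims in turn, both via the elementary identity $\mathcal{L}_n(x) - \mathcal{T}_n(x) = (\beta_n - \alpha_n)\mathcal{P}_{n-1}(x)$.

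For the absence of common zeros, suppose $\xi$ satisfies $\mathcal{L}_n(\xi) = \mathcal{T}_n(\xi) = 0$. The identity together with $\alpha_n \neq \beta_n$ forces $\mathcal{P}_{n-1}(\xi) = 0$, and substituting back into either defining relation then forces $\mathcal{P}_n(\xi) = 0$. This contradicts \Cref{Ranga zeros interlacing R2}, according to which the zeros of $\mathcal{P}_n$ and $\mathcal{P}_{n-1}$ strictly interlace and in particular cannot coincide.

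For the interlacing, my plan is to transfer the triple interlacing already established in \Cref{Interlacing theorem P_n P_n-1 Q_n}. Let $y_1 < y_2 < \cdots < y_n$ denote the zeros of $\mathcal{L}_n$, which are real and simple by \Cref{Theorem 1 with aplha_n condition} together with \Cref{Ranga zeros interlacing R2}. Evaluating the identity at the $y_i$ gives $\mathcal{T}_n(y_i) = (\alpha_n - \beta_n)\mathcal{P}_{n-1}(y_i)$. In both sign cases for $\alpha_n$, \Cref{Interlacing theorem P_n P_n-1 Q_n} produces a strict interlacing between the $y_i$ and the zeros of $\mathcal{P}_{n-1}$, with the lone extra zero of $\mathcal{L}_n$ sitting outside the range of zeros of $\mathcal{P}_{n-1}$ on the side dictated by the sign of $\alpha_n$. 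Since $\mathcal{P}_{n-1}$ has positive leading coefficient (\Cref{Ranga leading coefficient}) and only simple real zeros, its values $\mathcal{P}_{n-1}(y_i)$ strictly alternate in sign, and hence so do the $\mathcal{T}_n(y_i)$. The intermediate value theorem then yields at least one zero of $\mathcal{T}_n$ in each of the $n-1$ open intervals $(y_i, y_{i+1})$. Since $\mathcal{T}_n$ is itself an $R_{II}$ polynomial of degree $n$ with $n$ real simple zeros (\Cref{remark another alpha_n or beta_n} combined again with \Cref{Ranga zeros interlacing R2} applied to the $\mathcal{T}$ sequence), exactly one zero of $\mathcal{T}_n$ lies in each such interval, with the last zero sitting outside $[y_1, y_n]$. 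This is the asserted interlacing.

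As a cross-check, one could instead compute $\mathcal{W}(\mathcal{L}_n, \mathcal{T}_n)(x) = (\beta_n - \alpha_n)\,\mathcal{W}(\mathcal{P}_{n-1}, \mathcal{P}_n)(x)$ and appeal to the Wronskian criterion quoted just before the theorem. The main obstacle in a purely Wronskian-based approach is controlling the sign of $\mathcal{W}(\mathcal{P}_{n-1}, \mathcal{P}_n)$ throughout $\mathbb{R}$, which in the $R_{II}$ setting is less transparent than in the classical orthogonal polynomial case because the second factor in the three-term recurrence carries the polynomial $x^2+\omega^2$ rather than a constant. Routing the argument through \Cref{Interlacing theorem P_n P_n-1 Q_n} sidesteps this difficulty, since it hands us the required sign alternation of $\mathcal{P}_{n-1}(y_i)$ directly. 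The only real bookkeeping that remains is verifying that the two sign subcases ($\alpha_n > 0$ and $\alpha_n < 0$) of \Cref{Interlacing theorem P_n P_n-1 Q_n} both furnish genuine strict interlacing across the full range of $y_i$'s so that no coincidence $\mathcal{P}_{n-1}(y_i)=0$ can occur; this is the one point in the argument that I would write out with care.
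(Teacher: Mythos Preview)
Your proof is correct and takes a genuinely different route from the paper's. The paper actually uses the Wronskian approach you describe as a ``cross-check'': it computes $\mathcal{W}(\mathcal{L}_n,\mathcal{T}_n)(x)=(\beta_n-\alpha_n)\mathcal{W}(\mathcal{P}_{n-1},\mathcal{P}_n)(x)$, asserts as a known fact that $\mathcal{W}(\mathcal{P}_{n-1},\mathcal{P}_n)(x)>0$ for all real $x$, and invokes the nonvanishing-Wronskian criterion directly. No further work is done; both the absence of common zeros and the interlacing come out of the Wronskian in one stroke.

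The irony is that the step you flagged as the obstacle---controlling the sign of $\mathcal{W}(\mathcal{P}_{n-1},\mathcal{P}_n)$ on all of $\mathbb{R}$ in the $R_{II}$ setting---is precisely the step the paper takes for granted without proof. Your argument, by contrast, is fully self-contained within the results already established in the paper: the triple interlacing of \Cref{Interlacing theorem P_n P_n-1 Q_n} supplies the sign alternation of $\mathcal{P}_{n-1}(y_i)$, and the identity $\mathcal{T}_n(y_i)=(\alpha_n-\beta_n)\mathcal{P}_{n-1}(y_i)$ pushes this through to $\mathcal{T}_n$. The price is the bookkeeping you mention (locating $y_n$ beyond $x_{n-1}^{(n-1)}$ when $\alpha_n>0$, and symmetrically for $\alpha_n<0$, so that the alternation runs across all $n$ zeros), plus the parity/counting observation that each open interval $(y_i,y_{i+1})$ must contain an \emph{odd} number of zeros of $\mathcal{T}_n$, hence exactly one. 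Both are routine but should indeed be written out. Your approach is longer but arguably more honest; the paper's is slicker but leans on an unproved positivity claim.
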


\begin{proof}
It is known that for $R_{II}$ polynomials $\mathcal{P}_{n}(x)$ and $\mathcal{P}_{n-1}(x)$, the Wronskian $\mathcal{W}_n(x)$
\begin{align*}
\mathcal{W}_n(x)=\begin{vmatrix}
\mathcal{P}_{n-1}(x) & \mathcal{P}_{n}(x)\\
\mathcal{P'}_{n-1}(x) & \mathcal{P'}_{n}(x)
\end{vmatrix},
\end{align*}
is positive for all real $x$. By the linearity property of Wronskian, we obtain
\begin{align*}
\mathcal{W}^1_n(x)=\begin{vmatrix}
	\mathcal{L}_n(x) & \mathcal{T}_{n}(x)\\
	\mathcal{L'}_n(x) & \mathcal{T'}_{n}(x)
\end{vmatrix}=(\beta_n-\alpha_{n})\mathcal{W}_n(x) \neq 0,
\end{align*}
whenever $\alpha_{n} \neq \beta_n$. Thus, the theorem follows from the properties of Wronskian.
\end{proof}

\begin{remark}
Let $\mathcal{L}_n(x)=\gamma_{n}\mathcal{P}_n(x)-\alpha_n\mathcal{P}_{n-1}(x)$ and $\mathcal{T}_n(x)=\epsilon_n\mathcal{P}_n(x)-\beta_n\mathcal{P}_{n-1}(x)$, $\alpha_n, \beta_n, \gamma_{n}, \epsilon_n \in \mathbb{R}\backslash \{ 0 \}$, $n \geq 0$. It is easy to verify that the conclusion of \Cref{Theorem different seq interlace} holds whenever $\alpha_n \epsilon_n -\beta_n \gamma_{n} \neq 0 $.
\end{remark}

Results concerning the self perturbation (or linear combination) of polynomials involve a constant sequence that combines two (or more) polynomials. In this section, we establish the existence of a non-constant unique sequence that satisfies both conditions \eqref{condition 1} and \eqref{condition 2}. The scaled version of GCRR polynomials \eqref{GCRR polynomial} is used for this purpose, which is given as
\begin{align}
\mathcal{P}'_{n}(x)=\dfrac{(\zeta)_n}{(2\zeta)_n}\mathcal{P}_{n}(x), \quad n \geq 1,
\end{align}
and satisfying the $R_{II}$ type recurrence relation
\begin{align}\label{CRR recurrence}
\mathcal{P}'_{n+1}(x)=\dfrac{\zeta+n}{2\zeta+n} \left(x-\dfrac{\theta}{\zeta+n}\right)\mathcal{P}'_n(x)-\dfrac{n}{4(2\zeta+n)} (x^2+\omega^2)\mathcal{P}'_{n-1}(x), \quad  n \geq 1.
\end{align}
Clearly,
\begin{align*}
\rho_{n}=\dfrac{\zeta+n}{2\zeta+n}, \quad c_{n-1} = \dfrac{\theta}{\zeta+n-1}, \qquad d_{n} = \dfrac{n}{4(2\zeta+n)}, \quad n \geq 1.
\end{align*}

\begin{remark}
The conditions \eqref{condition 1} and \eqref{condition 2} suggest that $\alpha_n$, $n \geq 1$, should be a root of the quadratic equation
\begin{align}\label{Quadratic equation 1}
\rho_n x^2-\rho_{n}\rho_{n-1}(1-c_{n-1})x+\omega^2d_n\rho_{n-1}=0, \quad n\geq 1,
\end{align}
for $\mathcal{L}_n(x)$, $n \geq 1$, to be a sequence of $R_{II}$ polynomials.
\end{remark}

The sequence $\{\alpha_n\}_{n=1}^\infty$, for self perturbing GCRR polynomials, is obtained as a solution to the quadratic equation \eqref{Quadratic equation 1}, i.e.,
\begin{align}\label{alpha_n as a root}
\alpha_n = \dfrac{\rho_{n}\rho_{n-1}(1-c_{n-1})\pm \sqrt{\rho_{n}^2\rho_{n-1}^2(1-c_{n-1})^2-4\omega^2d_n\rho_{n}\rho_{n-1}}}{2\rho_{n}}, \quad n \geq 1.
\end{align}
Observe that, for every $n$, there are two choices of $\alpha_n$. This gives rise to infinite possible ways to select $\{\alpha_n\}_{n=1}^\infty$. To have a unique choice for $\alpha_n$, one of the possible ways is to assume the discriminant in \eqref{alpha_n as a root} to be zero, i.e.,
\begin{align}\label{B^2-4AC}
&\rho_{n}^2\rho_{n-1}^2(1-c_{n-1})^2-4\omega^2d_n\rho_{n}\rho_{n-1}= 0 \nonumber\\
\Longrightarrow & \left(\dfrac{\zeta+n}{2\zeta+n}\right) \left(\dfrac{\zeta+n-1}{2\zeta+n-1}\right) \left(\dfrac{\zeta+n-\theta-1}{\zeta+n-1}\right)^2-\dfrac{n\omega^2}{2\zeta+n}=0.
\end{align}
Without loss of generality, we can assume $\theta=0$. So, \eqref{B^2-4AC} becomes
\begin{align}
\zeta^2+[2n(1-\omega^2)-1]\zeta+(1-\omega^2)(n^2-n)=0
\end{align}
The roots of this equation have to be positive ($\because \zeta > 0$), which restricts the choice of $\omega$, i.e., $\omega \geq 1$.
Now, $\{\alpha_n\}_{n=0}^\infty$ can be uniquely determined from \eqref{alpha_n as a root}.\par \noindent \textbf{Note:} For $\omega =1$, we have
\begin{align}\label{alpha_n}
\alpha_n = \dfrac{\rho_{n-1}(1-c_{n-1})}{2}=\dfrac{n}{2(n+1)}, \quad n \geq 1.
\end{align}

Based on the the \Cref{remark another alpha_n or beta_n}, the following can be proposed:
\begin{remark}
If the elements of the sequence $\beta_n$, $n \geq 1$, are the solutions of the quadratic equation
\begin{align}\label{Quadratic equation 2}
\rho_n x^2+\rho_{n}\rho_{n-1}(1+c_{n-1})x+\omega^2d_n\rho_{n-1}=0, \quad n\geq 1,
\end{align}
then, $\mathcal{T}_n(x)$, $n \geq 1$, is a sequence of $R_{II}$ polynomials.
\end{remark}
Following the technique used for the construction of $\{\alpha_n\}_{n=0}^\infty$, for the same choice of $\zeta$ and $\omega$, we conclude that
\begin{align}\label{beta_n}
\beta_n = -\dfrac{\rho_{n-1}(1+c_{n+1})}{2}=-\dfrac{n}{2(n+1)}, \quad n \geq 1.
\end{align}

Now, we will see the illustration of concepts developed in this section. For this purpose, the GCRR polynomials defined in \Cref{Recurrence Relations of R2 type} are considered. Depending upon the sign of the sequence, combining the adjacent polynomials, two scenarios of interlacing arise. These two cases are exhibited when the sequences $\{\alpha_n\}_{n=1}^\infty$ ($\alpha_{n} > 0$, $n \geq 1$) and $\{\beta_n\}_{n=1}^\infty$ ($\beta_{n} < 0$, $n \geq 1$) determined previously (see \eqref{alpha_n} and \eqref{beta_n}) are used to generate two different sequences of $R_{II}$ polynomials $\mathcal{L}_n(x)$ and $\mathcal{T}_n(x)$, respectively. The location of the zeros of $\mathcal{P}_n(x)$, $\mathcal{P}_{n-1}(x)$ and $\mathcal{L}_n(x)$ for $n=8$ is depicted in \Cref{Fig1}.
\begin{figure}[H]
\includegraphics[scale=0.8]{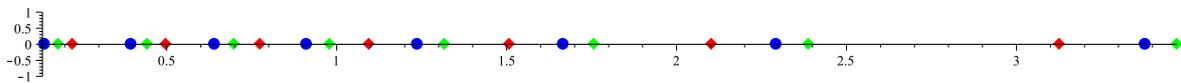}
\caption{Zeros of $\mathcal{P}_{7}(x)$ (red diamonds), $\mathcal{P}_{8}(x)$ (blue circles) and $\mathcal{L}_{8}(x)$ (green squares) }
\label{Fig1}
\end{figure}
The position of the zeros of $\mathcal{T}_n(x)$ with respect to $\mathcal{P}_n(x)$ and $\mathcal{P}_{n-1}(x)$  for $n=8$ is portrayed in \Cref{Fig2}.
\begin{figure}[H]
\includegraphics[scale=0.8]{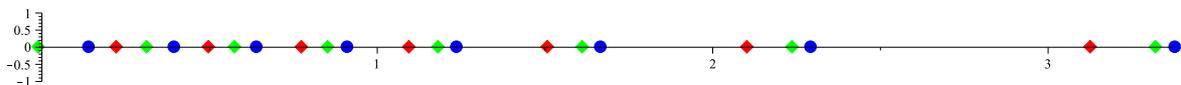}
\caption{Zeros of $\mathcal{P}_{7}(x)$ (red diamonds), $\mathcal{P}_{8}(x)$ (blue circles) and $\mathcal{T}_{8}(x)$ (green squares)}
\label{Fig2}
\end{figure}
Clearly, for both cases, the observations made here are consistent with the assertions of \Cref{Interlacing theorem P_n P_n-1 Q_n}. All the computations are performed and the plots for the zeros are provided using Mathematica ${\textregistered}$ with Intel Core i3-6006U CPU @ 2.00 GHz. \par

On the other hand, we inspect the situation of zeros from two different sequences of $R_{II}$ polynomials, namely $\mathcal{L}_n(x)$ and $\mathcal{T}_n(x)$. The interlacing between zeros, as shown in \Cref{Fig3}, confirms the validity of \Cref{Theorem different seq interlace}.
\begin{figure}[H]
\includegraphics[scale=0.8]{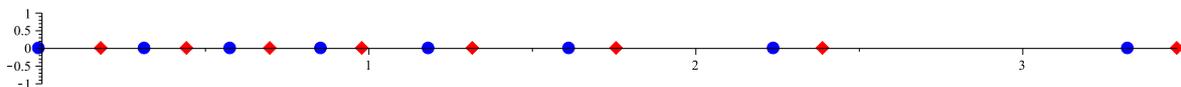}
\caption{Zeros of $\mathcal{L}_{8}(x)$ (red diamonds) and $\mathcal{T}_{8}(x)$ (blue circles)}
\label{Fig3}
\end{figure}

It is difficult to find closed-form expressions for $\mathcal{L}_n(x)=\mathcal{P}_n(x)-\alpha_n\mathcal{P}_{n-1}(x)$ and $\mathcal{T}_n(x)=\mathcal{P}_n(x)-\beta_n\mathcal{P}_{n-1}(x)$ where $\{\mathcal{P}_n(x)\}_{n=0}^\infty$ are GCRR polynomials. However, we will see that a closed expression can be obtained under some specific assumptions. Consider $R_{II}$ type recurrence \eqref{special R2 Linear pp} with $\rho_n = 1$ $c_n = 0$, $n \geq 0$ and $d_n =1/4$, $n \geq 1$. Note that such a choice of parameters corresponds to the GCRR polynomials when $\zeta=1$. The sequence $\{d_n\}_{n \geq 1}$ is a positive chain sequence.  Solving the recurrence
\begin{align*}
&\mathcal{P}_{n+1}(x) = x\mathcal{P}_n(x)-\dfrac{1}{4} (x^2+\omega^2)\mathcal{P}_{n-1}(x), \quad n\geq 1, \\
& \mathcal{P}_{1}(x)=x, \quad \mathcal{P}_{0}(x)=1,
\end{align*}
with above assumptions, we get
\begin{align*}
&\mathcal{P}_{n}(x)= \dfrac{i}{\omega}\left(\dfrac{x-i\omega}{2}\right)^{n+1}-\dfrac{i}{\omega}\left(\dfrac{x+i\omega}{2}\right)^{n+1}, \quad n \geq 0.
\end{align*}
Consider the self perturbation of polynomials $\{\mathcal{P}_n(x)\}_{n=0}^\infty$ defined as $\mathcal{L}_n(x)=\mathcal{P}_n(x)-\alpha_n\mathcal{P}_{n-1}(x)$, $ n\geq 1$. Our aim is to find a sequence $\{\alpha_n\}_{n=0}^\infty$ satisfying \eqref{condition 1} and \eqref{condition 2} so that $\{\mathcal{L}_n(x)\}_{n=0}^\infty$ is a sequence of $R_{II}$ polynomials. With the choice of $\rho_n$ and $d_n$ made above, from \eqref{condition 1}, we have $4\alpha_n\alpha_{n-1}-4\alpha_{n-1}+\omega^2=0$. Now, \eqref{condition 2} implies $\alpha_{n-1}=\alpha_n$, $n \geq 2$, which means $\{\alpha_n\}_{n=0}^\infty$ is a constant sequence. Since $\alpha_n$ reduces to a constant, we find that $\alpha_n=1/2\pm\dfrac{\sqrt{1-\omega^2}}{2}=\kappa$ (say). Thus, the linear combination
\begin{align}\label{Illustration Q_n linear combi}
\mathcal{L}_n(x)=\dfrac{i}{\omega}\left[\left(\dfrac{x-iw}{2}\right)^{n+1}-\left(\dfrac{x+iw}{2}\right)^{n+1}\right]- \dfrac{i \kappa}{\omega}\left[\left(\dfrac{x-iw}{2}\right)^{n}- \left(\dfrac{x+iw}{2}\right)^{n}\right],
\end{align}
satisfies $R_{II}$ type recurrence relation \eqref{R2 recurrence from linear 1}. With $\alpha_n=1/2$, $ n \geq 0$, the constants in \Cref{Theorem Linear R2 general RR} are given as
\begin{align*}
&e_n=p_n= 1/4, \quad f_n=q_n=-\kappa, \quad g_n=\kappa^2+\omega^2/4, \quad r_n=\kappa^2, \quad t_n=-1/16,\\
& s_n=0, \quad u_n=\kappa/4, \quad w_n=u_n\omega^2, \quad v_n =-(\kappa^2/4+\omega^2/8), \quad z_n=-\omega^2g_n/4, \quad n \geq 1.
\end{align*}
So, from \eqref{R2 recurrence from linear 1}, we have
\begin{align*}
&	\mathcal{L}_{n+1}(x) = x\mathcal{L}_n(x)-\dfrac{1}{4} (x^2+\omega^2)\mathcal{L}_{n-1}(x), \quad n\geq 1, \\
& \mathcal{L}_{1}(x)=x-\kappa, \quad \mathcal{L}_{0}(x)=1.
\end{align*}
This recurrence relation can be solved to obtain
\begin{align*}
\mathcal{L}_{0}(x)=1, \quad \mathcal{L}_n(x)=\dfrac{i}{\omega2^{n+1}}\left[(x-i\omega)^n(x-i\omega-2\kappa)-(x+i\omega)^n(x+i\omega-2\kappa) \right], \quad n\geq 1,
\end{align*}
which is same as \eqref{Illustration Q_n linear combi}. \par
Note that, for $\mathcal{T}_n(x)$, we can choose $\omega=\pm 1$ and the sequence $\{\beta_n\}_{n=1}^\infty$ as per the conditions mentioned in \Cref{remark another alpha_n or beta_n}. Since, $\rho_n$ and $d_n$ are same as above, we have $\beta_{n-1}=\beta_n=\beta$, $n \geq 2$, from \eqref{condition 2}. Further, the restriction $f_n=g_n$ implies $\beta$ is a root of $4\beta^2+4\beta+1=0$ $\Rightarrow \beta=-1/2$. Thus, the polynomials $\mathcal{T}_n(x)$ are given as
\begin{align*}
\mathcal{T}_{0}(x)=1, \quad \mathcal{T}_n(x)=\dfrac{i}{2^{n+1}}\left[(x-i)^n(x-i+1)-(x+i)^n(x+i+1) \right], \quad n\geq 1,
\end{align*}
and they satisfy the $R_{II}$ type recurrence
\begin{align*}
&	\mathcal{T}_{n+1}(x) = x\mathcal{T}_n(x)-\dfrac{1}{4} (x^2+1)\mathcal{T}_{n-1}(x), \quad n\geq 1, \\
& \mathcal{T}_{1}(x)=x+1/2, \quad \mathcal{T}_{0}(x)=1.
\end{align*}
As mentioned earlier, in general, it is not easy to find the measure with respect to which $\mathcal{L}_n(x)$ and $\mathcal{T}_n(x)$ are orthogonal. However, it is shown in the following instance that for a particular choice of $\alpha_n$, this can be achieved. Assuming that the choice of $\rho_n$, $c_n$ and $d_n$ is same as given above, \eqref{condition 3} in \Cref{Theorem 2 with aplha_n condition} gives $\alpha_{n}=i/2$. For $\omega = \pm 1$, from \eqref{Q_n alpha beta} and \eqref{P_n to Q_n connection}, we get
\begin{align*}
	\mathcal{P}_n(x)= (-i)^n \sum_{k=0}^{n}(-n-1)_k \left(\dfrac{1-ix}{2}\right)^{k},
\end{align*}
using the power series expansion for the hypergeometric function, which is orthogonal with respect to the measure $w(x)=\dfrac{1}{\pi(1+x^2)}$. Therefore, a routine calculation implies that the self perturbed polynomials $\mathcal{L}_n(x)$ are given as
\begin{align*}
\mathcal{L}_n(x)=\mathcal{P}_n(x)-\alpha_n\mathcal{P}_{n-1}(x)=	\dfrac{(x+i)}{2i^{n+1}} \sum_{k=0}^{n-1}(-n)_k \left(\dfrac{1-ix}{2}\right)^{k},
\end{align*}
whose measure of orthogonality is $w'(x)=\dfrac{4}{\pi(1+x^2)^2}$.

{\noindent\bf Acknowledgements.}
The work of the second author is supported by the NBHM(DAE) Project No. NBHM/RP-1/2019.

\end{document}